\renewcommand{\a}{\alpha}
\renewcommand{\b}{\beta}
\renewcommand{\l}{\lambda}
\newcommand{\z}{\mathds{Z}}
\newcommand{\n}{\mathds{N}}
\newcommand{\m}{{\mathfrak{M}}}
\newcommand{\B}{{\mathfrak{B}}}
\newtheorem*{theorem-A}{Theorem A}
\newtheorem*{theorem-B}{Theorem B}
\newtheorem*{proposition*}{Proposition}
\newtheorem{theorem}{Theorem}[section]
\newtheorem{proposition}[theorem]{Proposition}
\newtheorem{lemma}[theorem]{Lemma}
\newtheorem{corollary}[theorem]{Corollary}
\theoremstyle{definition}
\newtheorem{defin}[theorem]{Definition}
\newtheorem{remark}[theorem]{Remark}
\newtheorem*{defin-bes}{Definition}
\newcommand{\opname}[1]{\operatorname{\rm{#1}}}
\newcommand{\Hom}{\opname{Hom}}
\newcommand{\Ext}{\opname{Ext}}
\newcommand{\End}{\opname{End}}
\newcommand{\Rep}{\opname{Rep}}
\newcommand{\rep}{\opname{rep}}
\newcommand{\im}{\opname{im}}
\newcommand{\coker}{\opname{coker}}
\renewcommand{\ker}{\opname{ker}}
\newcommand{\udim}{\underline{\dim}\,}
\begin{document}

\title[Quiver representations of maximal rank type and applications]{Quiver representations of maximal rank type and an application to representations of a quiver with three vertices}

\author{Marcel Wiedemann}

\address{Department of Pure Mathematics, University of Leeds, Leeds LS2 9JT, U.K.}

\email{marcel@maths.leeds.ac.uk}

\begin{abstract}
We introduce the notion of ``maximal rank type'' for representations of quivers, which requires certain collections of maps involved in the  representation to be of maximal rank. We show that real root representations of quivers are of maximal rank type. By using the maximal rank type property and universal extension functors we construct all real root representations of a particular wild quiver with three vertices. From this construction it follows that real root representations of this quiver are tree modules. Moreover, formulae given by Ringel can be applied to compute the dimension of the endomorphism ring of a given real root representation.
\end{abstract}

\subjclass[2000]{16G20}

\maketitle

\setcounter{section}{-1}
\section{Introduction}
\label{introduction}

\noindent Throughout this paper we fix an arbitrary field $k$. Let $Q$ be a (finite) {\textit{quiver}}, i.e. an oriented graph with finite vertex set $Q_0$ and finite arrow set $Q_1$ together with two functions $h,t:Q_1 \to Q_0$ assigning head and tail to each arrow $a\in Q_1$. For $i\in Q_0$ we define the following sets $H^Q(i):=\{a\in Q_1: h(a)=i\}$ and $T^Q(i):=\{a\in Q_1: t(a)=i\}$. 

A \textit{representation} $X$ of $Q$ is given by a vector space $X_i$ (over $k$) for each vertex $i\in Q_0$ together with a linear map $X_a: X_{t(a)} \to X_{h(a)}$ for each arrow $a\in Q_1$.

\begin{defin-bes}[Maximal Rank Type]
A representation $X$ of $Q$ is said to be of {\textit{maximal rank type}}, provided it satisfies the following conditions:
\begin{enumerate}[(i)]
\item For every vertex $i\in Q_0$ and for every subset $A\subseteq H^Q(i)$ the map
\begin{eqnarray*}
     \bigoplus_{a\in A} X_{t(a)} \stackrel{(X_a)_a }{ \longrightarrow} X_{i}
\end{eqnarray*}
is of maximal rank.
\item For every vertex $i\in Q_0$ and for every subset $B\subseteq T^Q(i)$ the map
\begin{eqnarray*}
      X_{i} \stackrel{(X_b)_b }{ \longrightarrow} \bigoplus_{b\in B} X_{h(b)}
\end{eqnarray*}
is of maximal rank.
\end{enumerate}
\end{defin-bes}

Clearly not every representation of $Q$ is of maximal rank type. The following example shows that even indecomposable representations of $Q$ might not be of maximal rank type.
\begin{center}
$\xymatrix{ k \ar@<1.0ex>[r]^-1 \ar@<-1.0ex>[r]_-0 & k }$
\end{center}

However, if $k$ is algebraically closed, a general representation for a given dimension vector $d$ is of maximal rank type. In particular, real Schur representations have this property; but clearly not all real root representations are real Schur representations. 

Let $\a$ be a positive real root for $Q$. Recall that there is a unique indecomposable representation of dimension vector $\a$ (see Section \ref{background} for details).

The main result of this paper is the following.
\begin{theorem-A}
Let $Q$ be a quiver and let $\a$ be a positive real root for $Q$. The unique indecomposable representation of dimension vector $\a$ is of maximal rank type.
\end{theorem-A}

In the second part of this paper we use Theorem A to construct all real root representations of the quiver
\begin{center}

$Q(f,g,h)$: 		$\xymatrix{ 1 \ar@<1.5ex>[r]^{\l_1} \ar@<-1ex>[r]^*-<0.4pc>{\vdots}_{\l_f} & 2 \ar@/^1.8pc/[r]^{\mu_1}
				         \ar@/^0.9pc/[r]^*-<0.4pc>{\vdots}_{\mu_g} & 3 \ar@/^1.8pc/[l]^{\nu_h} \ar@/^0.9pc/[l]^*-<1.3pc>{\vdots}_{\nu_1}}$,

\end{center}
with $f,g,h\ge 1$.

The quiver $Q(1,1,1)$ is considered by Jensen and Su in \cite{jensen}, where all real root representations are constructed explicitly. In \cite{ringel_2} Ringel extends their results to the quiver $Q(1,g,h)\; (g,h\ge 1)$ by using universal extension functors. In this paper we consider the general case and obtain the following result.
\begin{theorem-B}
Let $\a$ be a positive real root for the quiver $Q(f,g,h)$. The unique indecomposable representation of dimension vector $\a$ can be constructed by using universal extension functors starting from simple representations and real Schur representations of the quiver $Q'(f)\; (f\ge 1)$, where $Q'(f)$ denotes the following subquiver of $Q(f,g,h)$
\begin{center}

$Q'(f)$: $\xymatrix{ 1 \ar@<1.5ex>[r]^{\l_1} \ar@<-1ex>[r]^*-<0.4pc>{\vdots}_{\l_f} & 2}$.

\end{center}
\end{theorem-B}

The paper is organized as follows. In Section \ref{background} we discuss further notation and background results. In Section \ref{main-result} we prove Theorem A after discussing the constructions needed for the proof. To prove that real root representations are of maximal rank type, we have to show that certain collections of maps have maximal rank. The main idea of the proof is to insert an extra vertex and to attach to it the image of the considered map. Analyzing this modified representation yields the desired result.

In Section \ref{applications} we use the maximal rank type property of real root representations to prove Theorem B. It follows that real root representations of $Q(f,g,h)$ are tree modules. Moreover, using the formulae given in \cite{ringel} (Section 1) we can compute the dimension of the endomorphism ring for a given real root representation.

{\bf{Acknowledgements.}} The author would like to thank his supervisor, Prof. W. Crawley-Boevey, for his continuing support and guidance, especially for the help and advice he has given during the preparation of this paper. The author also wishes to thank the University of Leeds for financial support in form of a University Research Scholarship.

\section{Further notation and background results}
\label{background}
Let $Q$ be a quiver with vertex set $Q_0$ and arrow set $Q_1$. Let $X$ and $Y$ be two representations of $Q$. A \textit{homomorphism} $\phi: X \to Y$ is given by linear maps $\phi_i: X_i\to Y_i$ such that for each arrow $a\in Q_1$, $a:i\to j$ say, the square

\begin{center}
$\xymatrix{ X_i \ar@<0.0ex>[r]^-{X_a} \ar@<0.0ex>[d]^-{\phi_i} & X_j \ar@<0.0ex>[d]^-{\phi_j}	\\
					  Y_i \ar@<0.0ex>[r]^-{Y_a} & Y_j }$
\end{center}
commutes. The morphism $\phi$ is said to be an isomorphism if $\phi_i$ is an isomorphism for all $i\in Q_0$. The direct sum $X\oplus Y$ of two representations $X$ and $Y$ is defined by
\begin{eqnarray*}
	(X\oplus Y)_i &=& X_i\oplus Y_i,\quad \forall\; i\in Q_0,	\\
	(X\oplus Y)_a &=& \left( \begin{array}[2]{cc}
				   										X_a	& 0	\\
															0		& Y_a
						  						 \end{array} \right),\quad \forall\; a\in Q_1.
\end{eqnarray*}
A representation $Z$ is called \textit{decomposable} if $Z\cong X\oplus Y$ for non-zero representations $X$ and $Y$. In this way one obtains a category of representations, denoted by $\Rep_k Q$.

A \textit{dimension vector} for $Q$ is given by an element of $\n^{Q_0}$. We will write $e_i$ for the coordinate vector at vertex $i$ and by $d[i],\, i\in Q_0,$ we denote the $i$-th coordinate of $d\in \n^{Q_0}$. A dimension vector $d\in\n^{Q_0}$ is said to be \textit{sincere} provided $d[i]>0$ for all $i\in Q_0$. If $X$ is a finite dimensional representation, meaning that all vector spaces $X_i\; (i\in Q_0)$ are finite dimensional, then $\udim X= (\dim X_i)_{i\in Q_0}$ is the dimension vector of $X$. Throughout this paper we only consider finite dimensional representations. We denote by $\rep_k Q$ the full subcategory with objects the finite dimensional representations of $Q$.

The \textit{Ringel form} on $\z^{Q_0}$ is defined by
\begin{eqnarray*}
    \langle\a,\b\rangle = \sum_{i\in Q_0} \a[i]\b[i]   -   \sum_{a\in Q_1}\a[t(a)]\b[h(a)]
\end{eqnarray*}
Moreover, let $(\a,\b)=\langle\a,\b\rangle+\langle\b,\a\rangle$ be its symmetrization. 

We say that a vertex $i\in Q_0$ is \textit{loop-free} if there are no arrows $a:i\to i$. By a quiver without loops we mean a quiver with only loop-free vertices. In this paper we only consider quivers without loops. For a loop-free vertex $i\in Q_0$ the simple reflection $s_i:\z^{Q_0}\to \z^{Q_0}$ is defined by
\begin{eqnarray*}
	s_i(\a):=\a-(\a,e_i)e_i.
\end{eqnarray*}

A \textit{simple root} is a vector $e_i$ for $i\in Q_0$. The set of simple roots is denoted by $\Pi$. The \textit{Weyl group}, denoted by $W$, is the subgroup of $\textrm{GL}(\z^n)$, where $n=|Q_0|$, generated by the $s_i$. By $\Delta^+_{\textrm{re}}(Q) := \{\a \in W(\Pi) : \a> 0\} $ we denote the set of (positive) \textit{real roots} for $Q$. Let $M:=\{\b\in \n^{Q_0}: \,\textrm{$\b$ has connected support and}\, (\b,e_i)\le 0\; \textrm{for all $i\in Q_0$} \}$. By $\Delta^+_{\textrm{im}}(Q) := \bigcup_{w\in W}\, w(M)$ we denote the set of (positive) \textit{imaginary roots} for $Q$. Moreover, we define $\Delta^+ (Q):= \Delta^+_{\textrm{re}}(Q) \cup \Delta^+_{\textrm{im}}(Q)$. We have the following lemma.

\begin{lemma}[\cite{kac}, Lemma 2.1]
\label{root-lemma}
For $\a \in \Delta^+(Q)$ one has
\begin{enumerate}[(i)]
\item $\a \in \Delta^+_{\textrm{re}}(Q)$ if and only if $\langle \a,\a \rangle =1$,
\item $\a \in \Delta^+_{\textrm{im}}(Q)$ if and only if $\langle \a,\a \rangle \le 0$.
\end{enumerate}
\end{lemma}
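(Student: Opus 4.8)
The plan is to reduce everything to the $W$-invariance of the symmetrized form $(-,-)$ together with the two defining descriptions of real and imaginary roots. First I would record the basic computation that each simple reflection preserves the form. Writing $s_i(\a)=\a-(\a,e_i)e_i$ and using that there are no loops, so that $(e_i,e_i)=2\langle e_i,e_i\rangle=2$, a direct expansion gives
\begin{eqnarray*}
(s_i\a,s_i\b)=(\a,\b)-(\a,e_i)(\b,e_i)\bigl(2-(e_i,e_i)\bigr)=(\a,\b).
\end{eqnarray*}
Hence every $w\in W$ preserves $(-,-)$, and therefore preserves the quadratic form $\a\mapsto\langle\a,\a\rangle=\tfrac12(\a,\a)$.

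Next I would establish the two ``if'' directions. For a real root $\a=w(e_i)$ invariance gives $\langle\a,\a\rangle=\langle e_i,e_i\rangle=1$, the last equality again because $i$ is loop-free. For an imaginary root $\a=w(\b)$ with $\b\in M$, I would compute
\begin{eqnarray*}
(\b,\b)=\sum_{i\in Q_0}\b[i]\,(\b,e_i)\le 0,
\end{eqnarray*}
since each $\b[i]\ge 0$ and each $(\b,e_i)\le 0$ by the definition of $M$; invariance then yields $\langle\a,\a\rangle=\tfrac12(\b,\b)\le 0$.

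Finally, the converses follow formally from what has already been shown, once I use that $\Delta^+(Q)=\Delta^+_{\textrm{re}}(Q)\cup\Delta^+_{\textrm{im}}(Q)$ by definition. Indeed, the forward implications show that real roots take the value $1$ while imaginary roots take values $\le 0$, so the two classes are disjoint. Thus if $\a\in\Delta^+(Q)$ satisfies $\langle\a,\a\rangle=1$ it cannot be imaginary, hence is real; and if $\langle\a,\a\rangle\le 0$ it cannot be real, hence is imaginary.

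The computations here are all routine, so there is no serious obstacle; the only point requiring care is that the ``only if'' statements are not proved directly but deduced from the disjointness of the two value ranges, which in turn relies on the defining decomposition of $\Delta^+(Q)$ and on the loop-free hypothesis that forces $\langle e_i,e_i\rangle=1$.
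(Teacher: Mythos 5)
Your proof is correct. Note that the paper itself gives no argument for this lemma --- it is quoted from Kac's paper (\cite{kac}, Lemma 2.1) --- so there is nothing to compare against; your argument (invariance of the symmetrized form under the simple reflections using $(e_i,e_i)=2$ at loop-free vertices, the value computations $\langle e_i,e_i\rangle =1$ and $(\b,\b)=\sum_i \b[i](\b,e_i)\le 0$ for $\b\in M$, and then the converses via disjointness of the value ranges and the decomposition $\Delta^+ = \Delta^+_{\textrm{re}}\cup\Delta^+_{\textrm{im}}$) is the standard and complete one.
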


As mentioned in the introduction we have the following remarkable theorem.

\begin{theorem}[Kac(\cite{kac}, Theorem 1 \& 2), Schofield(\cite{schofield}, Theorem 9)]
\label{kac-theorem}
Let $k$ be a field, $Q$ be a quiver, and let $\a \in \n^{Q_0}$.
\begin{enumerate}[(i)]
\item For $\a \notin  \Delta^+(Q)$ all representations of $Q$ of dimension vector $\a$ are decomposable.
\item For $\a \in \Delta^+_{\textrm{re}}(Q)$ there exists one and only one indecomposable representation of dimension vector $\a$.
\end{enumerate}
\end{theorem}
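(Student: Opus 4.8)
The plan is to pass first to an algebraically closed field and study the representation variety, then descend to an arbitrary $k$. Fix $\a\in\n^{Q_0}$ and let $R(Q,\a)=\bigoplus_{a\in Q_1}\Hom(k^{\a[t(a)]},k^{\a[h(a)]})$ be the affine space of all representations of dimension vector $\a$, acted on by $\textrm{GL}(\a)=\prod_{i\in Q_0}\textrm{GL}_{\a[i]}(k)$ via base change; isomorphism classes correspond to orbits, and the scalars act trivially. The numerical engine of the whole argument is the identity $\dim\textrm{GL}(\a)-\dim R(Q,\a)=\langle\a,\a\rangle$, together with the orbit-dimension formula $\dim\mathcal{O}_X=\dim\textrm{GL}(\a)-\dim\End(X)$ (since $\Aut(X)$ is open in $\End(X)$). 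For an indecomposable $X$ the ring $\End(X)$ is local, and combining these two facts bounds the number of parameters of the family of indecomposables of dimension $\a$ in terms of $p(\a):=1-\langle\a,\a\rangle$.

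For part (ii) I would first treat quivers without oriented cycles using the Bernstein--Gelfand--Ponomarev reflection functors $C_i^{\pm}$ at sinks and sources. A positive real root satisfies $\langle\a,\a\rangle=1$ by Lemma \ref{root-lemma}(i), and can be written $\a=s_{i_r}\cdots s_{i_1}(e_{i_0})$ for an admissible sequence of reflections at sink/source vertices. Since each $C_i^{\pm}$ induces a bijection on isomorphism classes of indecomposables away from the simple $S_i$ and acts on dimension vectors by $s_i$, iterating identifies the indecomposable of dimension $\a$ with $S_{i_0}$, yielding both existence and uniqueness. When $Q$ has oriented cycles (so sinks and sources may fail to exist) this reduction is unavailable, and here I would argue geometrically instead: $\langle\a,\a\rangle=1$ gives $p(\a)=0$, so the indecomposable locus, if nonempty, is a single dense orbit. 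Proving that this locus is actually nonempty is the genuinely hard step, and it is precisely where Kac's detailed analysis of $R(Q,\a)$ is required.

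For part (i) the tool is Kac's canonical decomposition: the generic representation in $R(Q,\a)$ is isomorphic to a direct sum $\b_1\oplus\cdots\oplus\b_s$ whose generic summands are indecomposable, and each $\b_j$ is forced to be a positive root because indecomposables of dimension $\b$ occur only when $p(\b)\ge 0$, with the parameter count matching $p(\b)$. One then shows that if $\a\notin\Delta^+(Q)$ it cannot itself be a summand, so the generic — hence, after a semicontinuity argument, every — representation of dimension $\a$ decomposes. The main obstacle in this half is making the canonical decomposition and the accompanying dimension estimates precise enough to exclude non-roots entirely.

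Finally, to pass from $\bar k$ to an arbitrary field $k$ — Schofield's contribution — I would exploit that for a real root $\a$ the unique indecomposable over $\bar k$ is rigid, so $\dim\End=\langle\a,\a\rangle=1$ and it is a brick with $\End=\bar k$. This absolute indecomposability descends: the representation is defined over $k$, remains indecomposable there, and its endomorphism ring stays $k$, so the orbit argument again forces uniqueness. Overall the reflection-functor reduction cleanly handles only the acyclic, algebraically closed case; the real difficulties are existence in part (ii) for cyclic quivers, the canonical-decomposition estimates in part (i), and controlling the descent to non-closed fields.
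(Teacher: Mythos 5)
This statement is quoted in the paper as a background theorem with attributions to Kac and Schofield; the paper gives no proof of it, so there is nothing internal to compare against. Judged on its own terms, your outline follows the broadly standard Kac strategy (representation varieties, the identity $\dim\textrm{GL}(\a)-\dim R(Q,\a)=\langle\a,\a\rangle$, reflection functors in the acyclic case, canonical decomposition for part (i)), but it contains a concrete error that propagates through both the uniqueness and the descent steps: you conflate real roots with real Schur roots. For a general positive real root one only has $\dim\Hom_{kQ}(X_\a,X_\a)-\dim\Ext^1_{kQ}(X_\a,X_\a)=\langle\a,\a\rangle=1$, so $\dim\End(X_\a)=1+\dim\Ext^1_{kQ}(X_\a,X_\a)$ can be arbitrarily large; $X_\a$ need not be rigid and need not be a brick. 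The paper itself flags exactly this point in the introduction (``clearly not all real root representations are real Schur representations''). Consequently your claim that for $p(\a)=0$ ``the indecomposable locus, if nonempty, is a single dense orbit'' is false for non-Schur real roots: the orbit of $X_\a$ has codimension $\dim\End(X_\a)-1>0$ and the \emph{generic} representation of dimension $\a$ then decomposes. Kac's uniqueness statement is that the indecomposables form a single orbit, not a dense one, and establishing this requires his parameter-count $\mu_\a=p(\a)$ together with a separate argument; it does not fall out of the orbit-dimension formula alone.

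The same conflation undermines the descent step. Since $X_\a$ over $\bar k$ is generally not a brick, you cannot argue that ``its endomorphism ring stays $k$''; and even granting that $X_\a$ is Galois-stable up to isomorphism with local endomorphism ring, the obstruction to defining it over $k$ lives in the Brauer group of $k$, which need not vanish. That obstruction is precisely the content of Schofield's paper (note its title: the \emph{field of definition} of a real representation), and his argument constructs the representation over the prime field directly rather than by the formal descent you invoke. So the two genuinely hard points you correctly identify (existence for real roots on quivers with oriented cycles, and the canonical-decomposition estimates for part (i)) are deferred to Kac, while the two steps you do sketch in detail (single dense orbit; descent via rigidity) are the ones that break for non-Schur real roots.
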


For finite fields and algebraically closed fields the theorem is due to Kac (\cite{kac}, Theorem 1 \& 2). As pointed out in the introduction of \cite{schofield}, Kac's method of proof showed that the above theorem holds for fields of characteristic $p$. The proof for fields of characteristic zero is due to Schofield (\cite{schofield}, Theorem 9).

For a given positive real root $\a$ for $Q$ the unique indecomposable representation (up to isomorphism) of dimension vector $\a$ is denoted by $X_\a$. By a \textit{real root representation} we mean an $X_\a$ for $\a$ a positive real root. The simple representation at vertex $i\in Q_0$ is denoted by $S(i)$. By a \textit{simple representation} we always mean an $S(i)$ for some vertex $i\in Q_0$. A \textit{Schur representation} is a representation with $\End_{kQ} (X)=k$. By a \textit{real Schur representation} we mean a real representation which is also a Schur representation. A positive real root is called a \textit{real Schur root} if $X_\a$ is a real Schur representation. An indecomposable representation $X$ is called \textit{exceptional} provided $\Ext^1_{kQ}(X,X)=0$.

We finish this section with the following useful formula: if $X,Y$ are representations of $Q$ then we have 
\begin{eqnarray*}
	\dim \Hom_{kQ}(X,Y) - \dim \Ext^1_{kQ} (X,Y) = \langle \udim X, \udim Y \rangle.
\end{eqnarray*}
It follows that $\Ext^1_{kQ}(X_\a,X_\a)=0$ for $\a$ a real Schur root.

\section{Proof of Theorem A}
\label{main-result}

Let $Q$ be a quiver with vertex set $Q_0$ and arrow set $Q_1$. Moreover, let $i\in Q_0$ be a vertex of $Q$ and let $X$ be a representation of $Q$. Note, that we only consider quivers without loops. For a given subset $A\subseteq H^Q(i)$ we define the quiver $Q^i_A$ and the representation $X^i_A$  (of the quiver $Q^i_A$) as follows
\begin{eqnarray*}
     (Q^i_A)_0 := Q_0\, \dot\cup\, \{z\},\quad	(Q^i_A)_1 := (Q_1-A)\, \dot\cup\, \{\gamma_a : a\in A\}\, \dot\cup\, \{\delta\}
\end{eqnarray*}
with
\begin{eqnarray*}
	  t(\gamma_a) := t(a), & & h(\gamma_a) := z\quad	\forall\, a\in A,	\\
		t(\delta) := z, & & h(\delta) := i,
\end{eqnarray*}
(heads and tails for all arrows in $Q_1-A$ remain unchanged) and
\begin{eqnarray*}
     (X^i_A)_j := X_j\quad \forall\, j\in Q_0,\quad (X^i_A)_z := \im 
     \left(\bigoplus_{a\in A} X_{t(a)} \stackrel{(X_a)_a }{ \longrightarrow} X_{i} \right) \subset X_i,
\end{eqnarray*}
with maps
\begin{eqnarray*}
		  (X^i_A)_q &:=& X_q											 \quad \forall\, q\in Q_1-A,	\\
		  (X^i_A)_{\delta} &:=& \textrm{inclusion},	\\
			(X^i_A)_{\gamma_a} &:=& \hat{X}_a							 \quad \forall\, a\in A,
\end{eqnarray*}
where $\hat{X}_a: X_{t(a)} \to (X^i_A)_z$ is the unique linear map with $(X^i_A)_{\delta}\circ \hat{X}_a = X_a $.

The construction above gives a functor $F^i_A:\rep_k Q \to \rep_k Q^i_A$, defined as follows
\begin{eqnarray*}
			F^i_A:\textrm{Ob}(\rep_k Q) &\to & \textrm{Ob}(\rep_k Q^i_A) \\
					X				&\mapsto &   X^i_A,
\end{eqnarray*}
with the obvious definition on morphisms. Moreover, there is a natural functor \linebreak ${}^i_A G:\rep_k Q^i_A \to \rep_k Q$, defined by
\begin{eqnarray*}
			{}^i_A G:\textrm{Ob}(\rep_k Q^i_A) &\to& \textrm{Ob}(\rep_k Q)	\\
						X				 &\mapsto & {}^i_A G(X),
\end{eqnarray*}
with
\begin{eqnarray*}
		({}^i_AG(X))_j	:= X_j\quad \forall\, j\in Q_0,	
\end{eqnarray*}
and maps
\begin{eqnarray*}
		({}^i_A G(X))_q	&:=& X_q\quad	\forall\, q\in Q_1-A,\\
		({}^i_A G(X))_a	&:=& X_{\delta} X_{\gamma_a}\quad \forall\, a\in A,
\end{eqnarray*}
together with the obvious definition on morphisms. The functor ${}^i_A G$ is left-adjoint to the functor $F^i_A$ and ${}^i_A G\circ F^i_A$ is naturally isomorphic to the identity functor on $\rep_k Q$. 

We get the following useful lemma.

\begin{lemma}
\label{lemma-indecomposable}
Let $Q$ be a quiver with vertex set $Q_0$ and arrow set $Q_1$. Moreover, let $i\in Q_0$ be a vertex and let $X$ be a representation of $Q$. If $X$ is indecomposable, then so is $F^i_A (X)= X^i_A$ for every subset of $A\subseteq H^Q(i)$.
\end{lemma}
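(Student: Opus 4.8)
The plan is to control indecomposability through the endomorphism ring: a finite-dimensional representation is indecomposable if and only if its endomorphism ring is local, equivalently if $0$ and $1$ are its only idempotents (Fitting's lemma). Since $X^i_A$ is finite-dimensional, it therefore suffices to establish a ring isomorphism $\End_{kQ^i_A}(X^i_A) \cong \End_{kQ}(X)$ and to transport the idempotent condition across it; as $X$ is indecomposable, $\End_{kQ}(X)$ is local, whence so is $\End_{kQ^i_A}(X^i_A)$.

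To produce this isomorphism I would exploit the adjunction recorded above. Since ${}^i_A G\circ F^i_A$ is naturally isomorphic to the identity, the counit of the adjunction ${}^i_A G\dashv F^i_A$ is an isomorphism, so the right adjoint $F^i_A$ is fully faithful. A fully faithful functor induces a ring isomorphism on endomorphism algebras, giving
\begin{eqnarray*}
\End_{kQ^i_A}(X^i_A) &=& \Hom_{kQ^i_A}(F^i_A X, F^i_A X) \\
&\cong& \Hom_{kQ}(X,X) = \End_{kQ}(X),
\end{eqnarray*}
exactly what is needed. Alternatively, and more concretely, I would describe this isomorphism by hand, which also pinpoints the step requiring care. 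An endomorphism $e$ of $X^i_A$ consists of maps $e_j$ for $j\in Q_0$ together with a map $e_z$ on $(X^i_A)_z$. The square for the inclusion arrow $\delta:z\to i$ forces $e_i$ to preserve the subspace $(X^i_A)_z\subseteq X_i$ and identifies $e_z$ with the restriction $e_i|_{(X^i_A)_z}$; thus $e$ is completely determined by its components $(e_j)_{j\in Q_0}$. Composing the squares for $\gamma_a$ and $\delta$ shows that these components commute with every $X_a=(X^i_A)_\delta\circ (X^i_A)_{\gamma_a}$ $(a\in A)$, so they assemble into an endomorphism of ${}^i_A G(X^i_A)=X$.

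The main obstacle is the converse, i.e.\ surjectivity of this correspondence: checking that every endomorphism of $X$ lifts back to $\End_{kQ^i_A}(X^i_A)$. The crux is that for any $\psi\in\End_{kQ}(X)$ the relation $\psi_i\circ X_a=X_a\circ\psi_{t(a)}$ gives $\psi_i\big(\im(\bigoplus_{a\in A} X_a)\big)\subseteq \im(\bigoplus_{a\in A} X_a)=(X^i_A)_z$, so $\psi_i$ automatically stabilises the subspace and one may set $\psi_z:=\psi_i|_{(X^i_A)_z}$; a direct check then confirms that the resulting tuple satisfies the $\gamma_a$- and $\delta$-squares, yielding $F^i_A(\psi)$.

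Once the two assignments are seen to be mutually inverse ring homomorphisms, an idempotent $e\in\End_{kQ^i_A}(X^i_A)$ corresponds to an idempotent of $\End_{kQ}(X)$, which is $0$ or $1$ by indecomposability of $X$; hence $e\in\{0,1\}$ and $X^i_A$ is indecomposable. I expect the only genuinely delicate point to be the verification that $\psi_i$ stabilises $(X^i_A)_z$ and that the induced lift respects all the new squares, the rest being formal consequences of the adjunction.
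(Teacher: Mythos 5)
Your proof is correct, but it packages the key ingredients differently from the paper. The paper argues directly with a putative decomposition: assuming $X^i_A\cong U\oplus V$, it applies ${}^i_A G$ and uses ${}^i_A G\circ F^i_A\cong\mathrm{id}$ together with indecomposability of $X$ to conclude that, say, ${}^i_A G(U)=0$; the adjunction isomorphism $\Hom_{kQ}({}^i_A G(U),X)\cong\Hom_{kQ^i_A}(U,F^i_A X)=\Hom_{kQ^i_A}(U,U\oplus V)$ then forces $U=0$. You instead upgrade the same two facts (the adjunction and the invertibility of its counit) to full faithfulness of $F^i_A$, deduce a ring isomorphism $\End_{kQ^i_A}(X^i_A)\cong\End_{kQ}(X)$, and transport idempotents. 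Both arguments are sound; yours establishes slightly more (the identification of endomorphism rings, which the paper never needs), while the paper's is shorter and avoids idempotents and locality altogether. Two minor points: the inference ``the counit is an isomorphism because ${}^i_A G\circ F^i_A\cong\mathrm{id}$'' rests on a standard but not entirely trivial lemma about adjunctions (an arbitrary natural isomorphism $LR\cong\mathrm{id}$ does force the counit to be invertible), though here the counit is visibly the identity on each $X_j$, and your explicit verification --- that the $\delta$-square forces $e_z=e_i|_{(X^i_A)_z}$, and that any $\psi\in\End_{kQ}(X)$ stabilises $\im\bigl((X_a)_{a\in A}\bigr)$ and hence lifts --- makes the argument self-contained without it. Also, you only need the idempotent criterion for indecomposability (valid for arbitrary modules), not the full strength of Fitting's lemma, although finite-dimensionality holds throughout the paper anyway.
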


\begin{proof}
Assume that $X^i_A = F^i_A (X) \cong U\oplus V$, then $X\cong {}^i_A G\circ F^i_A(X) \cong {}^i_A G(U) \oplus {}^i_A G(V)$. By assumption $X$ is indecomposable, so w.l.o.g. we can assume that ${}^i_A G(U)=0$. Hence,
\begin{eqnarray*}
	0 = \Hom_{kQ} ({}^i_A G(U) , X) = \Hom_{k Q^i_A} (U, F^i_A X) = \Hom_{k Q^i_A} (U,U\oplus V),
\end{eqnarray*}
which is only possible in case $U=0$. This proves the assertion.

\end{proof}

We are now able to proof the main theorem of this paper.

\begin{theorem-A}
\label{theorem-A}
Let $Q$ be a quiver and let $\a$ be a positive real root for $Q$. The unique indecomposable representation of dimension vector $\a$ is of maximal rank type.
\end{theorem-A}

\begin{proof}
Let $\a$ be a real root for $Q$ and let $X_\a$ be the unique indecomposable representation of $Q$ of dimension vector $\a$. Moreover, let $i\in Q_0$ and let $A\subset H^Q(i)$. We have to show that the map
\begin{equation*}
		\bigoplus_{a\in A} X_{t(a)} \stackrel{(X_a)_a }{ \longrightarrow} X_{i}
\end{equation*}
has maximal rank. This is equivalent to showing that 
\begin{equation*}
\dim (X^i_A)_z = \min\left\{\sum_{a\in A} \a[t(a)], \a[i]\right\}.
\end{equation*}
The representation $X^i_A$ of $Q^i_A$ is indecomposable by Lemma \ref{lemma-indecomposable}. It follows from Theorem \ref{kac-theorem} that $\udim X^i_A \in \Delta^+(Q^i_A)$. Hence, by Lemma \ref{root-lemma}, $\langle \hat{\a},\hat{\a} \rangle \le 1$, where $\hat{\a}:=\udim X^i_A$. We have 
\begin{eqnarray*}
	\langle \hat{\a},\hat{\a} \rangle &=& \underbrace{\langle \a,\a \rangle}_{=1} + \sum_{a\in A} \a[t(a)]\a[i] + \hat{\a}[z]^2-\hat{\a}[z]\hat{\a}[i]-\sum_{a\in A} \hat{\a}[t(\gamma_a)]\hat{\a}[z]	\\
	&=& 1 + \left(\hat{\a}[z] - \sum_{a\in A} \a[t(a)] \right)\cdot\left(\hat{\a}[z] -\a[i]  \right) \le 1
\end{eqnarray*}
and, hence,
\begin{eqnarray*}
	\left(\hat{\a}[z] - \sum_{a\in A} \a[t(a)] \right)\cdot\left(\hat{\a}[z] -\a[i]  \right) \le 0.
\end{eqnarray*}
However, we clearly have $\hat{\a}[z] \le \min \left\{ \sum_{a\in A} \a[t(a)], \a[i] \right\}$, by definition of $X^i_A$. This implies that
\begin{eqnarray*}
	\left(\hat{\a}[z] - \sum_{a\in A} \a[t(a)] \right)\cdot\left(\hat{\a}[z] -\a[i]  \right) = 0,
\end{eqnarray*}
i.e. $\hat{\a}[z] = \min \left\{ \sum_{a\in A} \a[t(a)], \a[i] \right\}$, and, hence, 
\begin{eqnarray*}
\dim (X^i_A)_z = \min\left\{\sum_{a\in A} \a[t(a)], \a[i]\right\}.
\end{eqnarray*}
This shows that the map $\bigoplus_{a\in A} X_{t(a)} \to X_{i}$ has maximal rank.

Dually, given subset $B\subset T^Q(i)$, to show that the map
\begin{eqnarray*}
      X_{i} \stackrel{(X_b)_b }{\longrightarrow} \bigoplus_{b\in B} X_{h(b)}
\end{eqnarray*}
has maximal rank it is equivalent to show that the map
\begin{eqnarray*}
      \bigoplus_{b\in B} X^*_{h(b)}  \stackrel{(X^*_b)_b }{\longrightarrow}			X^*_{i}
\end{eqnarray*}
has maximal rank, where ${}^*$ denotes the vector space dual. This follows from what we have proved above by considering the dual $X^*$ as a representation of the opposite quiver of $Q$.

\end{proof}

\section{Application(s): Representations of a quiver with three vertices}
\label{applications}

In this section we consider the quiver  

\begin{center}

$Q(f,g,h)$: 		$\xymatrix{ 1 \ar@<1.5ex>[r]^{\l_1} \ar@<-1ex>[r]^*-<0.4pc>{\vdots}_{\l_f} & 2 \ar@/^1.8pc/[r]^{\mu_1}
				         \ar@/^0.9pc/[r]^*-<0.4pc>{\vdots}_{\mu_g} & 3 \ar@/^1.8pc/[l]^{\nu_h} \ar@/^0.9pc/[l]^*-<1.3pc>{\vdots}_{\nu_1}}$,

\end{center}
with $f,g,h\ge 1$.

We define the following subquivers
\begin{center}

$Q'(f)$: $\xymatrix{ 1 \ar@<1.5ex>[r]^{\l_1} \ar@<-1ex>[r]^*-<0.4pc>{\vdots}_{\l_f} & 2 }$,

\end{center}
and
\begin{center}

$Q''(g,h)$: $\xymatrix{  2 \ar@/^1.8pc/[r]^{\mu_1}
				         \ar@/^0.9pc/[r]^*-<0.4pc>{\vdots}_{\mu_g} & 3 \ar@/^1.8pc/[l]^{\nu_h} \ar@/^0.9pc/[l]^*-<1.3pc>{\vdots}_{\nu_1}}$.

\end{center}

The quiver $Q(1,1,1)$ is considered by Jensen and Su in \cite{jensen}, where an explicit construction of all real root representations is given. Moreover, it is shown that all real root representations are tree modules and formulae to compute the dimensions of the endomorphism rings are given. In \cite{ringel_2} Ringel extends their results to the quiver $Q(1,g,h)\; (g,h\ge 1)$ by using the universal extension functors introduced in \cite{ringel}. 

In this section we consider the general case with $f,g,h\ge 1$. We use Ringel's universal extension functors to construct the real root representations of $Q=Q(f,g,h)$. 

We briefly discuss the situation for the subquivers $Q'(f)$ and $Q''(g,h)$: The real root representations of the subquiver $Q'(f)$ are preprojective or preinjective modules - for the path algebra $kQ'(f)$ - and can be constructed using BGP reflection functors (see \cite{bernstein}). It follows that the endomorphism ring of a real root representation of the subquiver $Q'(f)$ is isomorphic to the ground field $k$ and, hence, real root representations of $Q'(f)$ are real Schur representations. 

The subquiver $Q''(g,h)$ is considered by Ringel in \cite{ringel}. It is shown that all real root representations of $Q''(g,h)$ can be constructed using the universal extension functors defined in loc. cit., Section 1. Moreover, formulae to compute the dimensions of the endomorphism rings are given.

We see that the situation is very well understood for the subquivers $Q'(f)$ and $Q''(g,h)$. Therefore we will focus on real root representations with sincere dimension vectors.

\subsection{The Weyl group of $Q=Q(f,g,h)$}

Let $W$ be the Weyl group of $Q$. It is generated by the reflections $s_1,s_2,$ and $s_3$ subject to the following relations
\begin{eqnarray*}
	s_i^2 &=& 1, \quad i=1,2,3,	\\
	s_1s_3&=&s_3s_1,	\\
	s_1s_2s_1&=&s_2s_1s_2,\quad \textrm{if $f=1$}.
\end{eqnarray*}

We define the following elements of the Weyl group ($n\ge 0$)
\begin{eqnarray*}
		\zeta_1(n) &=& (s_1s_2)^n s_1,	\\
		\zeta_2(n) &=& (s_2s_1)^n s_2,  \\
		\rho_1(n)  &=& (s_1s_2)^n,	\\
	  \rho_2(n)  &=& (s_2s_1)^n,	
\end{eqnarray*}
and set $E:=\{\zeta_1(n), \zeta_2(n), \rho_1(n), \rho_2(n):n\ge 0\}$.

\begin{lemma}
\label{lemma-word-form}
Every element $w\in W-E$ can be written in the following form
\begin{equation}
	w = \chi_m s_3 \chi_{m-1} s_3 \chi_{m-2} s_3 \ldots s_3 \chi_2 s_3 \chi_1,	 \tag{$\ast$}\label{word-form}
\end{equation}
for some $m\ge 2$, where
\begin{eqnarray*}
	\chi_m &\in & \{\zeta_1(n) : n\ge 1\}\cup\{\zeta_2(n): n\ge 0\} \cup \{1\},	\\
	\chi_j &\in & \{\zeta_1(n) : n\ge 1\}\cup\{\zeta_2(n): n\ge 0\} ,\quad j=2,\ldots,m-1,	\\
	\chi_1 &\in & E.
\end{eqnarray*}
If $f=1$ then $w$ can be written in the form (\ref{word-form}) with only $\zeta_1(1)=\zeta_2(1), \rho_1(1),$ and $\rho_2(1)$ occurring.
\end{lemma}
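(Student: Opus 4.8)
The plan is to prove Lemma \ref{lemma-word-form} by a normal-form argument on reduced words in the Coxeter group $W$. The key observation driving everything is that $s_1$ and $s_3$ commute while $s_2$ does not commute with either, so $s_3$ acts as a ``separator''. Starting from an arbitrary reduced expression for $w\in W$, I would first push all occurrences of $s_1$ and $s_3$ past each other using $s_1s_3=s_3s_1$ so that between any two consecutive $s_3$'s (and before the first and after the last) we see only alternating products of $s_1$ and $s_2$. Each such ``$s_3$-free block'' is then, by definition, of the shape $(s_1s_2)^n s_1$, $(s_2s_1)^n s_2$, $(s_1s_2)^n$, or $(s_2s_1)^n$ — that is, it lies in the set $E$. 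This immediately gives a decomposition $w=\chi_m s_3\chi_{m-1}s_3\cdots s_3\chi_1$ with each $\chi_j\in E$; the content of the lemma is to sharpen the allowed types at each position.

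\textbf{Carrying out the sharpening.} Having placed $w$ in the rough form above, I would argue position by position. The innermost block $\chi_1$ is genuinely arbitrary in $E$, so no restriction is imposed there. For the interior blocks $\chi_2,\ldots,\chi_{m-1}$ and the outer block $\chi_m$, the constraint comes from reducedness: if an interior $\chi_j$ were of the form $\rho_1(n)=(s_1s_2)^n$ or $\rho_2(n)=(s_2s_1)^n$, one of its boundary letters would be an $s_1$ adjacent to an $s_3$, and using $s_1s_3=s_3s_1$ this $s_1$ can be absorbed into the neighbouring block, changing its type and shortening the separator structure. Pushing this reduction to completion forces the interior blocks to be exactly those in $\{\zeta_1(n):n\ge 1\}\cup\{\zeta_2(n):n\ge 0\}$ (the types whose boundary letters are $s_2$, or which begin/end in a way that cannot be absorbed), and forces $\chi_m$ into the same set together with the possibility $\chi_m=1$ (when $w$ begins with $s_3$). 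The requirement $m\ge 2$ is precisely the statement that $w\notin E$: a single block, i.e. $m=1$ with no $s_3$, is exactly an element of $E$, which we have excluded.

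\textbf{The $f=1$ case.} When $f=1$ we additionally have the braid relation $s_1s_2s_1=s_2s_1s_2$. This collapses the family $\{\zeta_1(n),\zeta_2(n):n\ge 0\}$ dramatically: any $\zeta_1(n)$ or $\zeta_2(n)$ with $n\ge 2$ contains a subword $s_1s_2s_1$ or $s_2s_1s_2$ which can be rewritten and then reduced, so the only surviving nontrivial blocks are $\zeta_1(1)=s_1s_2s_1$, $\rho_1(1)=s_1s_2$, and $\rho_2(1)=s_2s_1$, with $\zeta_1(1)=\zeta_2(1)$ under the braid relation. Feeding this back into the general normal form yields the claimed restricted form.

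\textbf{Main obstacle.} The delicate point is the absorption/reduction step for the interior and outer blocks: I must verify that the rewriting using $s_1s_3=s_3s_1$ (and, when $f=1$, the braid relation) can always be driven to the asserted canonical types without introducing hidden cancellations that would violate reducedness or uniqueness. Concretely, the hard part is bookkeeping which boundary letter of each block is $s_1$ versus $s_2$, and checking that exactly the configurations with an ``exposed'' $s_1$ next to an $s_3$ are the ones that reduce — while confirming that no further reduction is possible once the listed forms are reached. This case analysis at the block boundaries, rather than any deep structural input, is where the real work lies; everything else is a direct consequence of the commutation relation between $s_1$ and $s_3$.
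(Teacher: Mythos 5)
Your proposal matches the paper's proof: both start from the observation that $E=\langle s_1,s_2\rangle$, write $w$ as a product of $E$-blocks separated by $s_3$'s (with interior blocks not equal to $1$ or $s_1$), and then use $s_1s_3=s_3s_1$ to convert each $\rho_i$-type block into a $\zeta_i$-type block by transferring a spare $s_1$ across the adjacent $s_3$. The bookkeeping you flag as the main obstacle is handled in the paper by a five-case rewriting applied by descending induction on the block index, always pushing the spare $s_1$ \emph{rightward} into the not-yet-processed block (writing $\rho_1(n)=\zeta_1(n)s_1$ and $\rho_2(n)=\zeta_2(n-1)s_1$, the first via inserting $s_1s_1$ since $\rho_1(n)$ ends in $s_2$) and using that $s_1E\subseteq E$, which avoids the cascading reductions and the reducedness concerns you raise.
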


\begin{proof}
Let $w\in W-E$. Clearly, we can write $w$ in the form
\begin{equation*}
	w = \chi'_m s_3 \chi'_{m-1} s_3 \chi'_{m-2} s_3 \ldots s_3 \chi'_2 s_3 \chi'_1,	
\end{equation*}
with $m\ge 2$, $\chi'_j\in E$ for $j=1,\ldots,m$, $\chi'_{m-1},\ldots,\chi'_2\notin \{1,s_1\}$, and $\chi'_m\ne s_1$. We modify the elements $\chi'_j$ to get a word of the form (\ref{word-form}). Let $2\le j\le m$; we consider five cases and modify $\chi'_j$ appropriately
\begin{enumerate}[(i)]
\item $\chi'_j=1$. We set $\chi_j:=\chi'_j$ and $\chi''_{j-1}=\chi'_{j-1}$. This case requires $j=m$.
\item $\chi'_j=\zeta_1(n)$ for $n\ge 1$. We set $\chi_j:=\chi'_j$ and $\chi''_{j-1}:=\chi'_{j-1}$.
\item $\chi'_j=\zeta_2(n)$ for $n\ge 0$. We set $\chi_j:=\chi'_j$ and $\chi''_{j-1}:=\chi'_{j-1}$.
\item $\chi'_j=\rho_1(n)$ for $n\ge 1$. We set $\chi_j:=\zeta_1(n)$ and $\chi''_{j-1}:=s_1\chi'_{j-1}$.
\item $\chi'_j=\rho_2(n)$ for $n\ge 1$. We set $\chi_j:=\zeta_2(n-1)$ and $\chi''_{j-1}:=s_1\chi'_{j-1}$.
\end{enumerate}
Now we have
\begin{eqnarray*}
	w &=&  \chi'_m s_3 \chi'_{m-1} s_3 \ldots s_3\chi'_j s_3 \chi'_{j-1}s_3\ldots s_3 \chi'_2 s_3 \chi'_1	\\
	  &=& \chi'_m s_3 \chi'_{m-1} s_3 \ldots s_3\chi_j s_3 \chi''_{j-1}s_3\ldots s_3 \chi'_2 s_3 \chi'_1,	
\end{eqnarray*}
with $\chi_j$ of the desired form and $\chi''_{j-1}\in E$. The result follows by descending induction on $j$.

\end{proof}

\begin{remark}
\begin{enumerate}[(i)]
\item For a given $w\in W$ the previous proof gives an algorithm to rewrite $w$ in the from (\ref{word-form}).
\item We make the following convention: in case $f=1$ we assume that $n\le 1$ in every occurrence of $\zeta_1(n), \zeta_2(n), \rho_1(n),$ and $\rho_2(n)$. Cases in which $n\ge 2$ is assumed do not apply to the case $f=1$.
\end{enumerate}
\end{remark}

\subsection{Universal Extension Functors}
\label{extension-functors}

In this section we recall some of the results from \cite{ringel} and prove key lemmas, which will be used in the next section to construct the real root representations of $Q$.

We fix a representation $S$ with $\End_{kQ} S=k$ and $\Ext^1_{kQ} (S,S)=0$. In analogy to loc. cit., Section 1, we define the following subcategories of $\rep_k Q$. Let $\m^S$ be the full subcategory of all modules $X$ with $\Ext^1_{kQ}(S,X)=0$ such that, in addition, $X$ has no direct summand which can be embedded into some direct sum of copies of $S$. Similarly, let $\m_S$ be the full subcategory of all modules $X$ with $\Ext^1_{kQ}(X,S)=0$ such that, in addition, no direct summand of $X$ is a quotient of a direct sum of copies of $S$. Finally, let $\m^{-S}$ be the full subcategory of all modules $X$ with $\Hom_{kQ} (X,S)=0$, and let $\m_{-S}$ be the full subcategory of all modules $X$ with $\Hom_{kQ} (S,X)=0$. Moreover, we consider
\begin{eqnarray*}
    \m_S^S = \m^S \cap \m_S,\quad   \m_{-S}^{-S}= \m^{-S} \cap \m_{-S}.
\end{eqnarray*}
According to loc. cit., Proposition 1 \& $1^*$ and Proposition 2, we have the following equivalences of categories
\begin{eqnarray*}
\overline{\sigma}_S&:& \m^{-S} \to \m^S/S,		\\
\underline{\sigma}_S&:& \m_{-S} \to \m_S/S,		\\
\sigma_S&:& \m_{-S}^{-S} \to \m^S_S/S,
\end{eqnarray*}
where $\m^{S}/S$ denotes the quotient category of $\m^{S}$ modulo the maps which factor through direct sums of copies of $S$, similarly for $\m_S/S$ and $\m^{S}_S/S$.

In the following, we briefly discuss how these functors and their inverses operate on objects. The functor $\overline{\sigma}_S$ is given by the following construction: Let $X\in\m^{-S}$ and let $E_1,\ldots,E_r$ be a basis of the $k$-vector space $\Ext^1_{kQ} (S,X)$. Consider the exact sequence $E$ given by the elements $E_1,\ldots,E_r$
\begin{eqnarray*}
		E: 0\to X\to Z\to \bigoplus_r S \to 0.
\end{eqnarray*}
According to loc. cit., Lemma 3, we have $Z\in \m^S$ and we define $\overline{\sigma}_S(X):=Z$. Now, let $Y\in\m_{-S}$ and let $E'_1,\ldots,E'_s$ be a basis of the $k$-vector space $\Ext^1_{kQ}(Y,S)$. Consider the exact sequence $E'$ given by $E'_1,\ldots,E'_s$
\begin{eqnarray*}
		E': 0\to \bigoplus_s S\to U\to Y\to 0.
\end{eqnarray*}
Then we have $U\in \m_S$ and we set $\underline{\sigma}_S(Y):=U$. The functor $\sigma_S$ is given by applying both constructions successively.

The inverse $\overline{\sigma}_S^{-1}$ is constructed as follows: Let $X\in \m^{S}$ and let $\phi_1,\ldots,\phi_r$ be a basis of the $k$-vector space $\Hom_{kQ}(X,S)$. Then by loc. cit., Lemma 2, the sequence
\begin{eqnarray*}
		0\to X^{-S}\to X \stackrel{(\phi_i)_i}{\longrightarrow} \bigoplus_r S\to 0
\end{eqnarray*}
is exact, where $X^{-S}$ denotes the intersection of the kernels of all maps $X\to S$. We set $\overline{\sigma}^{-1}_S (X):=X^{-S}$. Now, let $Y\in\m_{S}$. The inverse $\underline{\sigma}_S^{-1}$ is given by $\underline{\sigma}_S^{-1}(Y):=Y/Y'$, where $Y'$ is the sum of the images of all maps $S\to Y$. The inverse $\sigma^{-1}_S$ is given by applying both constructions successively.

Both construction show that
\begin{eqnarray*}
 		\udim \sigma^{\pm 1}_S(X) = \udim X - (\udim X, \udim S)\,\udim S.
\end{eqnarray*}
We have the following proposition.
\begin{proposition}[\cite{ringel}, Proposition 3 \& $3^*$]
Let $X\in \m^S_S$. Then
\begin{eqnarray*}
	\dim \End_{kQ} \sigma^{-1}_S(X) = \dim \End_{kQ}(X) - \langle \udim X, \udim S \rangle \cdot \langle \udim S,\udim X \rangle.
\end{eqnarray*}
Let $Y\in \m^{-S}_{-S}$. Then
\begin{eqnarray}
\label{dim-end}
	\dim \End_{kQ} \sigma_S(Y) = \dim \End_{kQ}(Y) + \langle \udim Y, \udim S \rangle \cdot \langle \udim S,\udim Y \rangle.
\end{eqnarray}
\end{proposition}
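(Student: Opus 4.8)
The plan is to deduce the first formula from the second, and to prove the second by splitting $\sigma_S$ into the two elementary universal extensions $\overline{\sigma}_S$ and $\underline{\sigma}_S$ and tracking $\dim\End_{kQ}$ through each step. For the reduction, write $X=\sigma_S(Y)$ with $Y\in\m^{-S}_{-S}$. Since $\udim X=\udim Y-(\udim Y,\udim S)\,\udim S$ and $\langle\udim S,\udim S\rangle=1$ (as $S$ is exceptional with $\End_{kQ}S=k$ and $\Ext^1_{kQ}(S,S)=0$), a direct computation gives $\langle\udim X,\udim S\rangle=-\langle\udim S,\udim Y\rangle$ and $\langle\udim S,\udim X\rangle=-\langle\udim Y,\udim S\rangle$, whence $\langle\udim X,\udim S\rangle\cdot\langle\udim S,\udim X\rangle=\langle\udim Y,\udim S\rangle\cdot\langle\udim S,\udim Y\rangle$. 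Thus the two formulae carry the same correction term, and it suffices to establish the second.

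To prove the second formula I would set $Z:=\overline{\sigma}_S(Y)$ and $U:=\underline{\sigma}_S(Z)=\sigma_S(Y)$, using the defining sequences
\[ 0\to Y\to Z\to \bigoplus_r S\to 0, \qquad 0\to \bigoplus_p S\to U\to Z\to 0, \]
with $r=\dim\Ext^1_{kQ}(S,Y)=-\langle\udim S,\udim Y\rangle$ and $p=\dim\Ext^1_{kQ}(Z,S)$. The engine of the proof is to apply $\Hom_{kQ}(-,-)$ to these sequences in both variables and read off the long exact sequences, exploiting three inputs: the membership conditions $\Hom_{kQ}(Y,S)=\Hom_{kQ}(S,Y)=0$, the Schur conditions $\End_{kQ}S=k$ and $\Ext^1_{kQ}(S,S)=0$, and the universality of the extensions, which makes the relevant connecting maps $\End_{kQ}(\bigoplus S)\to\Ext^1_{kQ}(S,Y)$ (and its dual) isomorphisms.

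Carrying this out for the first sequence, the universal property forces $\Hom_{kQ}(S,Z)=0$ and $\Ext^1_{kQ}(S,Z)=0$ (so $Z\in\m_{-S}$, legitimising the second step), while applying $\Hom_{kQ}(-,S)$ yields $\Hom_{kQ}(Z,S)=k^r$ and $\Ext^1_{kQ}(Z,S)\cong\Ext^1_{kQ}(Y,S)$, so $p=\dim\Ext^1_{kQ}(Y,S)=-\langle\udim Y,\udim S\rangle$. Applying $\Hom_{kQ}(-,Z)$ and then $\Hom_{kQ}(Y,-)$, and using $\Hom_{kQ}(\bigoplus_rS,Z)=0$ and $\Hom_{kQ}(Y,\bigoplus_rS)=0$, collapses the sequences to $\End_{kQ}(Z)\cong\Hom_{kQ}(Y,Z)\cong\End_{kQ}(Y)$, so $\overline{\sigma}_S$ leaves $\dim\End_{kQ}$ unchanged. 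For the second sequence the dual computation, using $\Ext^1_{kQ}(U,S)=0$ and $\Hom_{kQ}(S,Z)=0$, gives $\dim\End_{kQ}(U)=\dim\Hom_{kQ}(U,\bigoplus_pS)+\dim\Hom_{kQ}(U,Z)$ with $\Hom_{kQ}(U,\bigoplus_pS)\cong\Hom_{kQ}(Z,\bigoplus_pS)=k^{pr}$ and $\Hom_{kQ}(U,Z)\cong\End_{kQ}(Z)$. Hence $\dim\End_{kQ}(\sigma_S Y)=\dim\End_{kQ}(Y)+pr$, and substituting $p=-\langle\udim Y,\udim S\rangle$ and $r=-\langle\udim S,\udim Y\rangle$ produces exactly (\ref{dim-end}).

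The main obstacle I anticipate is bookkeeping rather than conceptual: one must justify the vanishing of each $\Hom_{kQ}$ and $\Ext^1_{kQ}$ term that appears, and, crucially, confirm that the connecting homomorphisms are isomorphisms, which is precisely the universality built into $\overline{\sigma}_S$ and $\underline{\sigma}_S$ and is where the chosen bases of $\Ext^1_{kQ}(S,Y)$ and $\Ext^1_{kQ}(Z,S)$ enter. A useful consistency check at each stage is the identity $\dim\Hom_{kQ}(A,B)-\dim\Ext^1_{kQ}(A,B)=\langle\udim A,\udim B\rangle$ together with the fact that $\udim\sigma_S$ preserves the Ringel quadratic form (again because $\langle\udim S,\udim S\rangle=1$), which simultaneously pins down $\dim\Ext^1_{kQ}(Z,Z)$ and $\dim\Ext^1_{kQ}(U,U)$ as a byproduct.
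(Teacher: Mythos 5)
This proposition is quoted in the paper without proof --- it is imported verbatim from Ringel's \emph{Reflection functors for hereditary algebras} (Proposition 3 and $3^*$) --- so there is no in-paper argument to compare against. Your proof is correct: the reduction of the first formula to the second via $\langle\udim X,\udim S\rangle=-\langle\udim S,\udim Y\rangle$ is sound, the long-exact-sequence bookkeeping through the two universal extensions (in particular the identification of the connecting maps with isomorphisms coming from the chosen bases of $\Ext^1_{kQ}(S,Y)$ and $\Ext^1_{kQ}(Z,S)$, yielding $\Hom_{kQ}(S,Z)=\Ext^1_{kQ}(S,Z)=0$, $\End_{kQ}(Z)\cong\End_{kQ}(Y)$, and finally $\dim\End_{kQ}(U)=\dim\End_{kQ}(Z)+pr$) checks out, and it reconstructs essentially Ringel's original argument.
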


\begin{defin}
Let $\a$ be a real Schur root for $Q$. We define
\begin{eqnarray*}
	\m^{-\a}_{-\a}:=\m^{-X_\a}_{-X_\a}, \quad \m^{\a}_{\a}&:=&\m^{X_\a}_{X_\a},\quad	\textrm{and} \quad \sigma_\a:=\sigma_{X_\a}.
\end{eqnarray*}
\end{defin}

To construct real root representations of $Q$ we will reflect with respect to the following modules $S$: the simple representation $S(3)$ and the real root representations of $Q$ corresponding to certain positive real roots for the subquiver $Q'(f)$. Hence, we will use the following functors
\begin{eqnarray*}
\sigma_{e_3} &:&  \m_{-{e_3}}^{-{e_3}} \to \m_{e_3}^{e_3} / S(3)
\end{eqnarray*}
and
\begin{eqnarray*}
\sigma_{\chi} : \m_{-\chi}^{-\chi} \to \m_{\chi}^{\chi}/ X_{\chi}
\end{eqnarray*}
where $\chi$ denotes a positive real root for the subquiver $Q'(f)$. In order to use these functors, we have to make sure that $\sigma_{e_3}$ and $\sigma_{\chi}$ can be applied successively, i.e. we have to show that
\begin{eqnarray*}
    \m_{\chi}^{\chi} &\subset& \m_{-{e_3}}^{-{e_3}}, \\
    \m_{e_3}^{e_3} &\subset& \m_{-\chi}^{-\chi}.
\end{eqnarray*}
In general these inclusions do not hold. The following lemmas, however, show that under certain assumptions the functors can be applied successively.
We recall a key lemma from \cite{ringel}.
\begin{lemma}[\cite{ringel}, Lemma 4]
\label{lemma-ringel}
Let $S,T$ be modules, where $T$ is simple.
\begin{enumerate}[(i)]
\item If $\Ext^1_{kQ}(S,T)\ne 0$, then $\m^S \subset \m^{-T}$.
\item If $\Ext^1_{kQ}(T,S)\ne 0$, then $\m_S \subset \m_{-T}$.
\end{enumerate}
\end{lemma}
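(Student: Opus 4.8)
The final statement is Lemma~\ref{lemma-ringel} (Ringel, Lemma 4), which asserts two dual implications about the subcategories $\m^S$ and $\m_S$. I will sketch a proof of part~(i); part~(ii) follows by the dual argument in the opposite quiver.

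\textbf{Setup and strategy.} Recall that $\m^S$ consists of modules $X$ with $\Ext^1_{kQ}(S,X)=0$ having no direct summand embeddable into a direct sum of copies of $S$, while $\m^{-T}$ consists of modules $X$ with $\Hom_{kQ}(X,T)=0$. So I must show: if $\Ext^1_{kQ}(S,T)\ne 0$ and $X\in\m^S$, then every homomorphism $X\to T$ is zero. The plan is to argue by contradiction: suppose some nonzero $\phi:X\to T$ exists. Since $T$ is simple, $\phi$ is surjective, so $T$ is a quotient of $X$. The idea is to use the nonvanishing of $\Ext^1_{kQ}(S,T)$ together with $\Ext^1_{kQ}(S,X)=0$ to produce a summand of $X$ that embeds into copies of $S$, contradicting the definition of $\m^S$.

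\textbf{Key steps.} First I would record the short exact sequence $0\to K\to X\stackrel{\phi}{\longrightarrow} T\to 0$, where $K=\ker\phi$. Applying $\Hom_{kQ}(S,-)$ yields a long exact sequence whose relevant piece is
\begin{equation*}
	\Ext^1_{kQ}(S,X)\longrightarrow \Ext^1_{kQ}(S,T)\longrightarrow \Ext^2_{kQ}(S,K).
\end{equation*}
Since $kQ$ is hereditary, $\Ext^2_{kQ}(S,K)=0$, and by hypothesis $\Ext^1_{kQ}(S,X)=0$; hence $\Ext^1_{kQ}(S,T)=0$, contradicting the assumption $\Ext^1_{kQ}(S,T)\ne 0$. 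This already forces $\phi=0$, which is exactly the claim $\Hom_{kQ}(X,T)=0$, i.e. $X\in\m^{-T}$. So in fact the hereditary property of $kQ$ does the essential work, and the ``no summand embeds in copies of $S$'' clause of the definition of $\m^S$ is not even needed for this direction.

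\textbf{Main obstacle and the dual.} The crucial ingredient I would need to justify carefully is that $\Ext^2_{kQ}(S,K)=0$, which rests on $kQ$ being hereditary (global dimension $\le 1$); this is standard for path algebras of quivers without oriented cycles, and the quivers in play are of this kind, so I would simply invoke it. The only subtlety is ensuring the long exact sequence is set up with the correct variance, so that the vanishing of the outer terms squeezes $\Ext^1_{kQ}(S,T)$ to zero. Part~(ii) is obtained by passing to the opposite quiver $Q^{\mathrm{op}}$ and applying vector-space duality $(-)^*$, under which $\m_S$, $\m_{-T}$, and the condition $\Ext^1_{kQ}(T,S)\ne 0$ transform into the $\m^{S^*}$, $\m^{-T^*}$, and $\Ext^1_{kQ^{\mathrm{op}}}(S^*,T^*)\ne 0$ of part~(i); simplicity of $T$ is preserved. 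I expect the whole argument to be short, with the only genuine point being the hereditary hypothesis.
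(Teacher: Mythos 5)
Your argument is correct, and it is essentially the standard (and, as far as one can tell, Ringel's own) proof: the paper states this lemma only as a citation of \cite{ringel}, Lemma 4, and gives no proof of its own, so there is nothing in the text to compare against. The key step is exactly as you say: a nonzero map $X\to T$ with $T$ simple is surjective, and since $kQ$ is hereditary the induced map $\Ext^1_{kQ}(S,X)\to\Ext^1_{kQ}(S,T)$ is surjective, forcing $\Ext^1_{kQ}(S,T)=0$; you are also right that the ``no summand embeds in copies of $S$'' clause of $\m^S$ plays no role here, and the dual argument handles (ii). One small correction: your justification of $\Ext^2_{kQ}(S,K)=0$ via ``quivers without oriented cycles'' does not literally apply, since $Q(f,g,h)$ has oriented cycles between vertices $2$ and $3$; however, the representation category of an arbitrary finite quiver is hereditary (this is implicit in the paper's use of the standard projective resolution, e.g.\ in Lemma \ref{lemma-ext-constr} and the Euler-form identity), so the conclusion is unaffected.
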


\begin{corollary}
\label{reflection-cor}
We have
\begin{eqnarray*}
    \m_{e_2}^{e_2} &\subset& \m_{-{e_3}}^{-{e_3}}, \\
    \m_{e_3}^{e_3} &\subset& \m_{-e_2}^{-e_2}.
\end{eqnarray*}
\end{corollary}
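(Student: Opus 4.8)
The plan is to reduce both inclusions to direct applications of Lemma \ref{lemma-ringel}, exploiting the fact that $X_{e_2}=S(2)$ and $X_{e_3}=S(3)$ are the simple representations at vertices $2$ and $3$. Thus the categories in question are built from the simples $S(2)$ and $S(3)$, and by definition $\m_{e_2}^{e_2}=\m^{S(2)}\cap\m_{S(2)}$ and $\m_{-e_3}^{-e_3}=\m^{-S(3)}\cap\m_{-S(3)}$ (and symmetrically for the second inclusion). It therefore suffices to prove the four one-sided inclusions separately and intersect.

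First I would compute the relevant extension groups. From the Ringel form for $Q(f,g,h)$ one reads off $\langle e_2,e_3\rangle=-g$ (the contribution of the $g$ arrows $\mu_j:2\to 3$) and $\langle e_3,e_2\rangle=-h$ (the contribution of the $h$ arrows $\nu_j:3\to 2$). Since $S(2)$ and $S(3)$ are non-isomorphic simple modules, $\Hom_{kQ}(S(2),S(3))=\Hom_{kQ}(S(3),S(2))=0$, so the formula $\dim\Hom-\dim\Ext^1=\langle\,\cdot\,,\,\cdot\,\rangle$ gives
\[
\dim\Ext^1_{kQ}(S(2),S(3))=g\ge 1,\qquad \dim\Ext^1_{kQ}(S(3),S(2))=h\ge 1.
\]
In particular, both extension groups are non-zero.

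Now I would invoke Lemma \ref{lemma-ringel} four times. For the first inclusion: part (i) with $S=S(2)$, $T=S(3)$ (using $\Ext^1_{kQ}(S(2),S(3))\ne 0$) gives $\m^{S(2)}\subset\m^{-S(3)}$, while part (ii) with $S=S(2)$, $T=S(3)$ (using $\Ext^1_{kQ}(S(3),S(2))\ne 0$) gives $\m_{S(2)}\subset\m_{-S(3)}$; intersecting yields $\m_{e_2}^{e_2}\subset\m_{-e_3}^{-e_3}$. The second inclusion follows symmetrically by swapping the roles of $S(2)$ and $S(3)$: part (i) with $S=S(3)$, $T=S(2)$ gives $\m^{S(3)}\subset\m^{-S(2)}$, and part (ii) with $S=S(3)$, $T=S(2)$ gives $\m_{S(3)}\subset\m_{-S(2)}$, whence $\m_{e_3}^{e_3}\subset\m_{-e_2}^{-e_2}$.

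There is no serious obstacle here; the statement is essentially a repackaging of Lemma \ref{lemma-ringel}. The only point demanding care is bookkeeping: one must match the direction of each extension group ($\Ext^1_{kQ}(S,T)$ versus $\Ext^1_{kQ}(T,S)$) to the correct part of the lemma. This is precisely where the hypothesis $f,g,h\ge 1$ enters — it guarantees that \emph{both} $\Ext^1_{kQ}(S(2),S(3))$ and $\Ext^1_{kQ}(S(3),S(2))$ are non-zero, which is what makes all four one-sided inclusions available at once.
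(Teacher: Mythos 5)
Your proof is correct and is exactly the argument the paper intends: the corollary is stated without proof as an immediate consequence of Lemma \ref{lemma-ringel}, using $\dim\Ext^1_{kQ}(S(2),S(3))=g\ge 1$ and $\dim\Ext^1_{kQ}(S(3),S(2))=h\ge 1$ and applying both parts of the lemma in both directions. (Only a cosmetic remark: it is $g,h\ge 1$ that matters here, not $f$.)
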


The previous corollary shows that $\sigma_{e_2}$ and $\sigma_{e_3}$ can be applied successively. In the following two lemmas we consider the situation when $\chi$ is a sincere real root for $Q'=Q'(f)$. The maximal rank type property of real root representations ensures that the situation is suitably well-behaved.
\begin{lemma}
\label{reflection-lemma-1}
Let $\chi$ be a sincere real root for $Q'$. Then we have $\m_{\chi}^{\chi} \subset \m_{-{e_3}}^{-{e_3}}$.
\end{lemma}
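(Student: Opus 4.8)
The plan is to apply the key Lemma~\ref{lemma-ringel} (Ringel's Lemma~4) with $S := X_\chi$ and with the \emph{simple} module $T := S(3) = X_{e_3}$, so that the target unwinds as $\m_{-e_3}^{-e_3} = \m^{-S(3)} \cap \m_{-S(3)}$. Since by definition $\m_\chi^\chi = \m^{X_\chi} \cap \m_{X_\chi}$, the desired inclusion follows immediately once I establish the two containments $\m^{X_\chi} \subset \m^{-S(3)}$ and $\m_{X_\chi} \subset \m_{-S(3)}$ separately and intersect them. By Lemma~\ref{lemma-ringel}, these hold provided
\[
\Ext^1_{kQ}(X_\chi, S(3)) \neq 0 \quad\text{and}\quad \Ext^1_{kQ}(S(3), X_\chi) \neq 0,
\]
respectively; the point that makes Lemma~\ref{lemma-ringel} applicable is precisely that $T = S(3)$ is genuinely simple. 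Thus the whole statement reduces to verifying these two non-vanishings.

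Next I would check the non-vanishings by a dimension count. As $\chi$ is a root for $Q'$ we have $\chi[3] = 0$, so $X_\chi$ is supported on the vertices $\{1,2\}$ while $S(3)$ is concentrated at vertex $3$; the disjoint supports force
\[
\Hom_{kQ}(X_\chi, S(3)) = 0 = \Hom_{kQ}(S(3), X_\chi).
\]
Feeding this into the relation $\dim\Hom_{kQ} - \dim\Ext^1_{kQ} = \langle\,\cdot\,,\,\cdot\,\rangle$ of Section~\ref{background}, the Ext-dimensions are read off from the Ringel form alone: a one-line computation gives $\langle \chi, e_3\rangle = -g\,\chi[2]$ and $\langle e_3, \chi\rangle = -h\,\chi[2]$, whence
\[
\dim \Ext^1_{kQ}(X_\chi, S(3)) = g\,\chi[2], \qquad \dim \Ext^1_{kQ}(S(3), X_\chi) = h\,\chi[2].
\]
Here the hypothesis that $\chi$ is \emph{sincere} enters decisively, guaranteeing $\chi[2] > 0$; since $g,h \geq 1$ for $Q(f,g,h)$, both Ext-groups are non-zero.

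Finally I would assemble the pieces: Lemma~\ref{lemma-ringel}(i) gives $\m^{X_\chi} \subset \m^{-S(3)} = \m^{-e_3}$, Lemma~\ref{lemma-ringel}(ii) gives $\m_{X_\chi} \subset \m_{-S(3)} = \m_{-e_3}$, and intersecting yields $\m_\chi^\chi \subset \m_{-e_3}^{-e_3}$. I do not expect a serious obstacle in this particular lemma: the only real content is observing that $T = S(3)$ is simple, so Ringel's lemma applies off the shelf, after which everything is the disjoint-support remark plus the Euler-form computation. Indeed the argument parallels that of Corollary~\ref{reflection-cor}, with the simple $S(2)$ there replaced by $X_\chi$ and the numbers $g,h$ becoming $g\,\chi[2],\,h\,\chi[2]$. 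The maximal rank type property is \emph{not} needed here; its force is reserved for the companion inclusion $\m_{e_3}^{e_3} \subset \m_{-\chi}^{-\chi}$, where $X_\chi$ now plays the role of $T$ and is \emph{not} simple, so that Lemma~\ref{lemma-ringel} no longer applies and one must instead control $\Hom(X, X_\chi)$ and $\Hom(X_\chi, X)$ directly via the rank conditions at vertex $2$.
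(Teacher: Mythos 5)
Your proof is correct and follows essentially the same route as the paper: the paper's proof consists precisely of computing $\langle \chi,e_3\rangle=-g\chi[2]<0$ and $\langle e_3,\chi\rangle=-h\chi[2]<0$ and invoking Lemma~\ref{lemma-ringel}, with the vanishing of the Hom-spaces (and hence the non-vanishing of the Ext-groups) left implicit. Your version merely spells out those intermediate steps, and your closing remark about where the maximal rank type property is actually needed is also accurate.
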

\begin{proof}
We have $\langle \chi,e_3 \rangle =-g\cdot \chi[2]<0$ and $\langle e_3,\chi \rangle = -h\cdot \chi[2]<0$. Thus, Lemma \ref{lemma-ringel} applies and we deduce $\m_{\chi}^{\chi} \subset \m_{-{e_3}}^{-{e_3}}$.

\end{proof}

\begin{lemma}
\label{reflection-lemma-2}
Let $\chi$ be a sincere real root for $Q'$ and let $Y\in \m_{e_3}^{e_3}-\{S(1)\}$ be a real root representation. Then  we have $Y\in \m^{-\chi}_{-\chi}$.
\end{lemma}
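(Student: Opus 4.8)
```latex
\textbf{Proof proposal.}
The statement to prove is that for a sincere real root $\chi$ for $Q'$ and a real root representation $Y\in\m^{e_3}_{e_3}-\{S(1)\}$, we have $Y\in\m^{-\chi}_{-\chi}$, i.e. $\Hom_{kQ}(Y,X_\chi)=0$ and $\Hom_{kQ}(X_\chi,Y)=0$. The plan is to use the homological formula $\dim\Hom_{kQ}(A,B)-\dim\Ext^1_{kQ}(A,B)=\langle\udim A,\udim B\rangle$ together with the maximal rank type property of $X_\chi$ (Theorem A) and the membership conditions defining $\m^{e_3}_{e_3}$. Since $\chi$ is supported only on vertices $1$ and $2$, and $X_\chi$ is a preprojective/preinjective representation of the subquiver $Q'(f)$ viewed inside $\rep_k Q$ with zero at vertex $3$, a homomorphism to or from $Y$ is governed entirely by its components $\phi_1,\phi_2$ at vertices $1,2$, subject to commutativity with the $\l_j$.

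First I would record the dimension-vector data: writing $\udim Y=(y_1,y_2,y_3)$ and $\chi=(\chi_1,\chi_2,0)$, I would compute $\langle\chi,\udim Y\rangle=\chi_1 y_1+\chi_2 y_2-f\chi_1 y_2$ and $\langle\udim Y,\chi\rangle=y_1\chi_1+y_2\chi_2-f y_1\chi_2$. The membership $Y\in\m^{e_3}_{e_3}$ means $Y$ is obtained by reflecting at $S(3)$, so its dimension vector and structure at vertices $1,2$ are constrained; in particular I expect that on the subquiver $Q'(f)$ the restriction of $Y$ behaves like a representation whose vertex-$1$ and vertex-$2$ data are compatible with the relevant inequalities $(\udim Y,e_3)\ge 0$ or the absence of the trivial summands built into $\m^{e_3}_{e_3}$.

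The main step is to show the two Hom-spaces vanish. For $\Hom_{kQ}(X_\chi,Y)$, a nonzero map would have image a subrepresentation of $Y$ supported on vertices $1,2$; I would argue that the maximal rank type property of $X_\chi$ (the maps $\bigoplus X_{t(\l_j)}\to X_2$ being of maximal rank) forces such a map either to be injective on the relevant spaces or to have a kernel that splits off a copy of a simple, contradicting indecomposability of $X_\chi$ or the defining conditions on $Y$. Dually, for $\Hom_{kQ}(Y,X_\chi)$, I would use the maximal rank type of $X_\chi$ at vertex $2$ (the maps $X_2\to\bigoplus X_{h(\mu)}$-type conditions, read off $Q'$) together with the condition distinguishing $Y$ from $S(1)$ to rule out a nonzero map. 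The excluded case $Y=S(1)$ is precisely the degenerate one where $\Hom_{kQ}(S(1),X_\chi)\ne 0$ (since $\chi$ is sincere, hence $\chi_1>0$), which explains why it must be removed from the statement.

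The hardest part, I expect, will be making the maximal rank type property do real work rather than just serving as window dressing: I must translate ``$\Hom_{kQ}(X_\chi,Y)=0$'' into a statement about whether a family of intertwining maps can be simultaneously nonzero, and then invoke the rank conditions on the $\l_j$-maps of $X_\chi$ to derive a contradiction. I would structure this by assuming a nonzero $\phi$, considering the maximal-rank map at vertex $2$ of $X_\chi$, and tracking how $\phi$ interacts with it to force $\phi=0$. If a direct rank argument proves awkward, the fallback is the homological route: combine $\Ext$-vanishing coming from $Y\in\m^{e_3}_{e_3}$ (after checking $\Ext$ between $X_\chi$ and $Y$ vanishes, via $\langle\cdot,\cdot\rangle$ and the reflection functor's effect on dimension vectors) with the sign of $\langle\chi,\udim Y\rangle$ and $\langle\udim Y,\chi\rangle$ to force both Hom-dimensions to be zero. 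I would present whichever of these two arguments yields the cleaner contradiction, keeping the sincere-root hypothesis and the exclusion of $S(1)$ as the two essential inputs.
```
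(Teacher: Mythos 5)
There is a genuine gap: you are pointing Theorem A at the wrong representation. The paper's proof applies the maximal rank type property to $Y$, not to $X_\chi$. Concretely: from $Y\in\m^{e_3}_{e_3}$ one gets $\Ext^1_{kQ}(Y,S(3))=0=\Ext^1_{kQ}(S(3),Y)$, hence $\langle\udim Y,e_3\rangle\ge 0$ and $\langle e_3,\udim Y\rangle\ge 0$, which unwind to $\udim Y[3]\ge g\cdot\udim Y[2]$ and $\udim Y[3]\ge h\cdot\udim Y[2]$. Theorem A applied to $Y$ then forces each $Y_{\mu_i}$ to be injective and each $Y_{\nu_i}$ to be surjective. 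That is the engine: for $\phi\colon X_\chi\to Y$ one has $\phi_3=0$ (as $\chi[3]=0$), so $Y_{\mu_i}\phi_2=\phi_3(X_\chi)_{\mu_i}=0$ and injectivity of $Y_{\mu_i}$ kills $\phi_2$; then $\phi_1$ lands in $\bigcap_j\ker Y_{\l_j}$, which must vanish or else $S(1)$ splits off $Y$ (this is where $Y\ne S(1)$ enters). Dually, surjectivity of the $Y_{\nu_i}$ kills $\psi_2$ for $\psi\colon Y\to X_\chi$, and sincerity of $\chi$ kills $\psi_1$. Your proposed mechanism --- maximal rank of the $\l_j$-maps of $X_\chi$ --- cannot establish the decisive fact, namely that $Y$ has no nonzero subrepresentation (resp.\ quotient) supported on vertices $1,2$; that is a statement about the $\mu$- and $\nu$-maps of $Y$.

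Your fallback route also fails, for two reasons. First, your Euler form computations drop the terms coming from the arrows between vertices $2$ and $3$: correctly, $\langle\chi,\udim Y\rangle=\chi[1]y_1+\chi[2]y_2-f\chi[1]y_2-g\chi[2]y_3$ and $\langle\udim Y,\chi\rangle=y_1\chi[1]+y_2\chi[2]-fy_1\chi[2]-hy_3\chi[2]$; the omitted terms are exactly the ones that make these forms negative. Second, and more seriously, the formula $\dim\Hom-\dim\Ext^1=\langle\cdot,\cdot\rangle$ only yields $\Hom_{kQ}(X_\chi,Y)=0$ if you already know $\Ext^1_{kQ}(X_\chi,Y)=0$, which is not available and is in fact typically false here --- the whole point of subsequently applying $\sigma_\chi$ to $Y$ is that these extension groups are nonzero. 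You do correctly identify why $S(1)$ must be excluded and that everything reduces to the components at vertices $1$ and $2$, but the central step of the argument is missing.
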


\begin{proof}
Let $Y\in \m_{e_3}^{e_3}-\{S(1)\}$ be a real root representation. Since \linebreak $\Ext^1_{kQ} (Y,S(3))=0=\Ext^1_{kQ} (S(3),Y)$ we get $\langle \udim Y, e_3\rangle \ge 0$ and $\langle e_3,\udim Y \rangle \ge 0$. This implies
\begin{eqnarray*}
\langle \udim Y,e_3 \rangle &=& -g\cdot \udim Y[2]+\udim Y[3]\ge 0,	\\
\langle e_3,\udim Y\rangle  &=& -h\cdot \udim Y[2] + \udim Y[3]\ge 0,
\end{eqnarray*}
and, thus, 
\begin{eqnarray*}
\udim Y[3]&\ge& g\cdot \udim Y[2],	\\
\udim Y[3]&\ge& h\cdot \udim Y[2],
\end{eqnarray*}
in particular, $\udim Y[3]\ge \udim Y[2]$. Since $\udim Y$ is a positive real root we can apply Theorem A, which implies that the maps $Y_{\mu_i}\; (i=1,\ldots,g)$ (of the representation $Y$) are injective and the maps $Y_{\nu_i}\; (i=1,\ldots,h)$ are surjective. 

Now, let $\phi: X_{\chi}\to Y$ be a morphism. Clearly, $\phi_3=0$. The injectivity of the maps $Y_{\mu_i}$ implies that $\phi_2=0$. This, however, implies that $\phi_1=0$ since otherwise the intersection of the kernels of the maps $Y_{\l_j}\; (j=1,\ldots,f)$ would be non-zero. This is nonsense since $Y$ is indecomposable and $Y\ne S(1)$. Hence, $\phi=0$.

Now, let $\psi:Y\to X_{\chi}$ be a morphism. Clearly, $\psi_3=0$. The surjectivity of the maps $Y_{\nu_i}$ implies that $\psi_2=0$. This, however, implies that $\psi_1=0$ since otherwise the intersection of the kernels of the maps $(X_{\chi})_{\l_j}\; (j=1,\ldots,f)$ would be non-zero. This is nonsense since $X_{\chi}$ is indecomposable and $\chi$ is sincere for $Q'$. Hence, $\psi=0$.

This completes the proof.

\end{proof}

The previous lemma shows the following: Let $X\in \m^{-e_3}_{-e_3}-\{S(1)\}$ be a real root representation, then we have $\sigma_{e_3} (X)\in \m^{-\chi}_{-\chi}$, where $\chi$ is a sincere real root for $Q'$.

\subsection{Construction of real root representations for $Q=Q(f,g,h)$}

In this section we construct the real root representations for $Q$ by using universal extension functors together with the results of the last section.

For $n\ge 1$ we define the functors
\begin{eqnarray*}
\sigma_{\zeta_1(n)} &:=& \begin{cases}
														\sigma_{\rho_1(\frac{n}{2})(e_1)},	&			\textrm{if $n$ is even,}	\\
														\sigma_{\zeta_1(\frac{n-1}{2})(e_2)},	&			\textrm{if $n$ is odd,}
												 \end{cases}
\end{eqnarray*}
and for $n\ge 0$ we define the functors
\begin{eqnarray*}
\sigma_{\zeta_2(n)} &:=& \begin{cases}
														\sigma_{\rho_2(\frac{n}{2})(e_2)},	&			\textrm{if $n$ is even,}	\\
														\sigma_{\zeta_2(\frac{n-1}{2})(e_1)},	&			\textrm{if $n$ is odd}.
												 \end{cases}
\end{eqnarray*}

\begin{remark}
For $n\ge 1$ we clearly have
\begin{enumerate}[(i)]
\item $\rho_1(n)(e_3) = \zeta_1(n) (e_3)$,
\item $\rho_2(n)(e_3) = \zeta_2(n-1) (e_3)$.
\end{enumerate}
\end{remark}

\begin{lemma}
\label{constr-first-part}
Let $\a$ be a positive non-simple real root of the following form:
\begin{enumerate}[(i)]
\item $\a=\chi(e_j)$ with $j\in\{1,2\}$ and $\chi\in E$.
\item $\a=\chi(e_3)$ with $\chi\in E$.
\end{enumerate}
Then the unique indecomposable representation of dimension vector $\a$ has the following properties:
\begin{enumerate}[(i)]
\item $X_\a$ is an indecomposable representation of the subquiver $Q'(f)$ and, hence, can be constructed using BGP reflection functors. Moreover, $\End_{kQ} X_\a =k$ and $X_\a\in \m^{-e_3}_{-e_3}$.
\item $X_\a$ can be constructed using the functors $\sigma_{\zeta_i(n)}\; (i=1,2)$ and $X_\a\in \m^{-e_3}_{-e_3}$.
\end{enumerate}
\end{lemma}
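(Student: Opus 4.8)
My plan is to handle the two forms of $\a$ by entirely different means: form (i) reduces to the subquiver $Q'(f)$ by a support argument, while form (ii) is obtained by a single universal extension applied to $S(3)$, with Theorem A entering only at the very end to secure membership in $\m^{-e_3}_{-e_3}$.

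For form (i), $\a=\chi(e_j)$ with $j\in\{1,2\}$ and $\chi\in E$, I would first observe that $\chi$ is a word in $s_1,s_2$, and that each of these reflections alters only the first, respectively the second, coordinate; hence the third coordinate of $\a$ stays $0$ and $\a$ is supported on $\{1,2\}$. On dimension vectors with $\a[3]=0$ the reflections $s_1,s_2$ and the Ringel form of $Q$ coincide with those of $Q'(f)$, since every arrow incident to vertex $3$ contributes a factor $\a[3]=0$; thus $\langle\a,\a\rangle=1$ computed in $Q'(f)$, so $\a$ is a positive real root for $Q'(f)$ and $X_\a$ (which has $(X_\a)_3=0$) is the corresponding indecomposable. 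As recalled at the start of this section, real root representations of the generalized Kronecker quiver $Q'(f)$ are preprojective or preinjective, arise from BGP reflection functors, and satisfy $\End_{kQ}X_\a=k$. Finally, since $(X_\a)_3=0$ both $\Hom_{kQ}(X_\a,S(3))$ and $\Hom_{kQ}(S(3),X_\a)$ vanish, giving $X_\a\in\m^{-e_3}_{-e_3}$.

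For form (ii), $\a=\chi(e_3)$ with $\chi\in E$, I would use that vertices $1$ and $3$ are not adjacent, so $s_1(e_3)=e_3$; together with the preceding Remark this shows every non-simple root of form (ii) equals $\zeta_1(n)(e_3)$ for some $n\ge1$ or $\zeta_2(n)(e_3)$ for some $n\ge0$. I then claim $X_\a=\sigma_{\zeta_1(n)}(S(3))$, respectively $\sigma_{\zeta_2(n)}(S(3))$. The functor does apply, because the defining $Q'$-root $\delta$ of $\sigma_{\zeta_i(n)}$ (such as $\rho_1(\tfrac n2)(e_1)$ or $\zeta_1(\tfrac{n-1}2)(e_2)$) is supported on $\{1,2\}$, so $X_\delta$ and $S(3)$ have disjoint support and $S(3)\in\m^{-\delta}_{-\delta}$. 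By the dimension-vector formula, $\udim\sigma_\delta(S(3))=e_3-(e_3,\delta)\,\delta$, the reflection of $e_3$ in the real root $\delta$; using $s_{w(\g)}=w\,s_\g\,w^{-1}$, the identity $s_1(e_3)=e_3$, and the chain relations $\zeta_1(k)(e_3)=s_1\zeta_2(k-1)(e_3)$ and $\zeta_2(k)(e_3)=s_2\zeta_1(k)(e_3)$, a short computation confirms this reflection equals $\zeta_i(n)(e_3)=\a$. To identify the resulting object with $X_\a$, I note that $\sigma_\delta$ is the object part of the equivalence $\m^{-\delta}_{-\delta}\to\m^{\delta}_{\delta}/X_\delta$, that $S(3)$ is indecomposable, and that no nonzero summand of an object of $\m^\delta_\delta$ is killed in the quotient, so indecomposability in the quotient forces indecomposability as a representation; its dimension vector being the real root $\a$, Theorem \ref{kac-theorem} yields $\sigma_{\zeta_i(n)}(S(3))=X_\a$. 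For the membership $X_\a\in\m^{-e_3}_{-e_3}$ I would compute $\Hom_{kQ}(S(3),X_\a)=\bigcap_{i=1}^h\ker (X_\a)_{\nu_i}$ and $\Hom_{kQ}(X_\a,S(3))^*=(X_\a)_3/\sum_{i=1}^g\im (X_\a)_{\mu_i}$, so that both vanish exactly when the map $(X_\a)_3\to\bigoplus_{i=1}^h (X_\a)_2$ is injective and $\bigoplus_{i=1}^g (X_\a)_2\to(X_\a)_3$ is surjective. Since $\a[2]\ge 1=\a[3]$ for every non-simple root of form (ii), Theorem A makes both maps of maximal rank, hence injective and surjective respectively, giving $X_\a\in\m^{-e_3}_{-e_3}$.

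The main obstacle I anticipate is the identification $\sigma_{\zeta_i(n)}(S(3))=X_\a$ in form (ii): one must carry out the Weyl-group computation verifying that a single reflection in the $Q'$-root $\delta$ transports $e_3$ to $\zeta_i(n)(e_3)$, and one must promote indecomposability in the quotient category to genuine indecomposability of the representation. By contrast, Theorem A is used only in the final step, where the maximal rank type property is precisely what forces the injectivity and surjectivity characterizing $\m^{-e_3}_{-e_3}$.
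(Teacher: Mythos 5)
Your proof is correct, and its skeleton matches the paper's: part (i) by observing that $\a$ has support in $\{1,2\}$ (the paper simply says ``Clear''), and part (ii) by applying $\sigma_{\zeta_i(n)}$ to $S(3)$ and using the remark $\rho_1(n)(e_3)=\zeta_1(n)(e_3)$, $\rho_2(n)(e_3)=\zeta_2(n-1)(e_3)$ to reduce the $\rho$-cases to the $\zeta$-cases. The one place where you genuinely diverge is the membership $X_\a\in\m^{-e_3}_{-e_3}$ in part (ii): the paper gets this from the fact that $\sigma_\delta(S(3))$ lands in $\m^{\delta}_{\delta}$ by construction, combined with Lemma \ref{reflection-lemma-1} ($\m^{\chi}_{\chi}\subset\m^{-e_3}_{-e_3}$ for $\chi$ a sincere $Q'$-root, proved via Ringel's criterion $\Ext^1(S,T)\ne 0\Rightarrow\m^S\subset\m^{-T}$ using $\langle\chi,e_3\rangle<0$ and $\langle e_3,\chi\rangle<0$), with Corollary \ref{reflection-cor} covering the non-sincere case $\delta=e_2$ separately. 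You instead identify $\Hom(S(3),X_\a)$ and $\Hom(X_\a,S(3))$ with a kernel intersection and a cokernel and kill both by Theorem A, since $\a[2]\ge 1=\a[3]$. Your route is uniform over all cases and self-contained, at the cost of invoking the maximal rank type property where the paper's bookkeeping of the subcategories $\m^{\delta}_{\delta}$ makes it unnecessary; it is also closer in spirit to the proof of Lemma \ref{reflection-lemma-2}, which is where the paper really needs Theorem A. A further merit of your write-up is that you actually justify the identification $\sigma_{\zeta_i(n)}S(3)=X_{\zeta_i(n)(e_3)}$ (applicability via disjoint supports, the dimension-vector computation $s_\delta=\zeta_i(n)$, and promotion of indecomposability from the quotient category), all of which the paper asserts without comment.
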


\begin{proof}
\begin{enumerate}[(i)]
\item Clear.
\item If $\a=\zeta_i(n)(e_3)\; (i=1,2)$ then $X_\a=\sigma_{\zeta_i(n)} S(3)$ and $X_\a\in\m^{-e_3}_{-e_3}$ by Lemma \ref{reflection-lemma-1} or Corollary \ref{reflection-cor} in case $\a=\zeta_2(0)$. If $\a=\rho_i(n)(e_3)\; (i=1,2)$ we use the previous remark to reduce to the case we have just considered.
\end{enumerate}

\end{proof}

We are now able to state and prove a more explicit version of Theorem B.

\begin{theorem}
Let $\a$ be a sincere real root for $Q$. Then $\a$ is of the form
\begin{enumerate}[(i)]
\item $\a=\zeta_i(n)(e_3)$ with $i\in\{1,2\}$ and $n\ge 1$, or
\item $\a=w(e_j)$ with $j\in\{1,2,3\}$ and $w=\chi_m s_3 \chi_{m-1} s_3 \chi_{m-2} s_3 \ldots s_3 \chi_2 s_3 \chi_1$ of the form (\ref{word-form}) with $\chi_1(e_j)\ne e_1$.
\end{enumerate}
The corresponding unique indecomposable representation of dimension vector $\a$ can be constructed as follows
\begin{enumerate}[(i)]
\item $X_{\zeta_i(n)(e_3)}=\sigma_{\zeta_i(n)} S(3)$,
\item $X_\a = \sigma_{\chi_m}\sigma_{e_3}\sigma_{\chi_{m-1}}\ldots \sigma_{\chi_2}\sigma_{e_3} X_{\chi_1(e_j)}$,
where $X_{\chi_1(e_j)}$ denotes the unique indecomposable of dimension vector $\chi_1(e_j)$: constructed in Lemma \ref{constr-first-part}.
\end{enumerate}
\end{theorem}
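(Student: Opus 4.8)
The plan is to separate a combinatorial classification of the sincere real roots from the representation-theoretic verification that the displayed composition of functors produces the correct indecomposable. For the classification I would start from the fact that every positive real root is $\a=w(e_j)$ for some $w\in W$ and $j\in\{1,2,3\}$. Because $s_1$ and $s_2$ alter only the coordinates at vertices $1$ and $2$, an element $w\in E$ fixes the third coordinate; hence for $w\in E$ the root $w(e_j)$ can be sincere only when $j=3$, and using the identities $\rho_1(n)(e_3)=\zeta_1(n)(e_3)$ and $\rho_2(n)(e_3)=\zeta_2(n-1)(e_3)$ from the preceding remark to dispose of the $\rho$-cases, a direct computation of the first two coordinates shows that $w(e_3)$ is sincere exactly for $w=\zeta_i(n)$ with $i\in\{1,2\}$ and $n\ge1$. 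This is form (i). If $w\notin E$, Lemma \ref{lemma-word-form} writes $w$ in the form (\ref{word-form}), giving form (ii); and I may assume $\chi_1(e_j)\ne e_1$, since $s_3$ fixes $e_1$ and so a trailing factor $s_3\chi_1$ with $\chi_1(e_j)=e_1$ could be cancelled, contradicting the minimality of the chosen expression.

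For the construction, form (i) is immediate from Lemma \ref{constr-first-part}(ii), which already identifies $X_{\zeta_i(n)(e_3)}$ with $\sigma_{\zeta_i(n)}S(3)$. For form (ii) I would induct on $m$, building the word (\ref{word-form}) one syllable at a time and reading the functor composition from right to left, starting from the base object $X_{\chi_1(e_j)}$ supplied by Lemma \ref{constr-first-part}. Two facts drive the induction. First, on dimension vectors each functor realizes the corresponding element of $W$: the functor $\sigma_{e_3}=\sigma_{S(3)}$ realizes $s_3$, while $\sigma_{\chi_j}=\sigma_{X_\beta}$, taken with respect to the real Schur representation $X_\beta$ of $Q'$ fixed in the definition of $\sigma_{\zeta_i(n)}$, realizes the reflection $s_\beta$; and $\beta$ was chosen precisely so that $s_\beta=\chi_j$ (via the identity $w s_i w^{-1}=s_{w(e_i)}$). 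Hence the total composite carries $\chi_1(e_j)$ to $w(e_j)=\a$. Second, each functor is an equivalence onto its quotient category by the propositions recalled from \cite{ringel}, so it preserves indecomposability. Since every dimension vector arising along the way is a real root, Theorem \ref{kac-theorem} identifies the module at each stage as the unique indecomposable of that dimension vector, and the final output is therefore $X_\a$.

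The step I expect to be the crux is showing that each intermediate module lies in the domain of the next functor, so that the alternating composition is genuinely defined. Here I would apply the inclusion results in the order forced by the word. After applying $\sigma_{\chi_j}$ the output lies in $\m^{\chi_j}_{\chi_j}$, which is contained in $\m^{-e_3}_{-e_3}$ by Lemma \ref{reflection-lemma-1} (or in $\m^{-e_3}_{-e_3}$ again by Corollary \ref{reflection-cor} in the degenerate case $\chi_j=\zeta_2(0)=s_2$, where $\sigma_{\chi_j}=\sigma_{e_2}$), so that $\sigma_{e_3}$ may be applied. Conversely, after applying $\sigma_{e_3}$ to a real root representation different from $S(1)$, the remark following Lemma \ref{reflection-lemma-2} places the output in $\m^{-\chi_{j+1}}_{-\chi_{j+1}}$ when $\chi_{j+1}$ is sincere for $Q'$, and Corollary \ref{reflection-cor} places it in $\m^{-e_2}_{-e_2}$ when $\chi_{j+1}=s_2$, so that $\sigma_{\chi_{j+1}}$ may be applied. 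The hypothesis that the module is not $S(1)$, required for Lemma \ref{reflection-lemma-2}, holds automatically after the first step, since once $\sigma_{e_3}$ has been applied the third coordinate of the dimension vector is positive; at the first step it is exactly the normalization $\chi_1(e_j)\ne e_1$ that secures it. It is precisely through Lemma \ref{reflection-lemma-2} that Theorem A, and hence the maximal rank type property, enters and makes the whole inductive construction legitimate.
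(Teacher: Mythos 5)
Your proposal is correct and follows essentially the same route as the paper: the base case is supplied by Lemma \ref{constr-first-part}, and the alternating composition is legitimized by exactly the inclusions from Corollary \ref{reflection-cor}, Lemma \ref{reflection-lemma-1}, and Lemma \ref{reflection-lemma-2} (the last being where Theorem A enters). The paper's own proof is much terser -- it leaves the classification of sincere roots, the dimension-vector bookkeeping, and the induction implicit -- so your write-up merely spells out details the paper takes for granted.
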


\begin{proof}
\begin{enumerate}[(i)]
\item Follows from Lemma \ref{constr-first-part}.
\item It follows from Lemma \ref{constr-first-part} that $X_{\chi_1(e_j)} \in \m_{-{e_3}}^{-{e_3}}$ and, hence, $\sigma_{e_3}$ can be applied. Moreover, by Corollary \ref{reflection-cor}, Lemma \ref{reflection-lemma-1}, and Lemma \ref{reflection-lemma-2} we have
\begin{eqnarray*}
	X_\b \in \m^{e_3}_{e_3}-\{S(1)\},\; \b\;\textrm{real root}	&\Longrightarrow& X_\b\in \m^{-\chi}_{-\chi},	\\
	\m_{\chi}^{\chi}	&\subset& \m_{-e_3}^{-e_3},
\end{eqnarray*}
where $\chi$ is a positive real root for the subquiver $Q'(f)$ not equal to $e_1$.
This completes the proof.
\end{enumerate}

\end{proof}

\begin{remark}
Using formula (\ref{dim-end}) together with Theorem B one can easily compute the dimension of the endomorphism ring of a sincere real root representation of $Q$.
\end{remark}

\subsection{Real root representations of $Q=Q(f,g,h)$ are tree modules}

In this section we show that real root representations of $Q=Q(f,g,h)$ are tree modules. We recall some definitions from \cite{ringel_3}. Let $Q$ be an arbitrary quiver with vertex set $Q_0$ and arrow set $Q_1$. Moreover, let $X\in \rep_k Q$ be a representation of $Q$ with $\udim X=d$. We denote by $\B_i$ a fixed basis of the vector space $X_i\, (i\in Q_0)$ and we set $\B=\cup_{i\in Q_0} \B_i$. The set $\B$ is called a basis of $X$. We fix a basis $\B$ of $X$. For a given arrow $a:i\to j$ we can write $X_a$ as a $d[j] \times d[i]$-matrix $X_{a,\B}$ with rows indexed by $\B_j$ and with columns indexed by $\B_i$. We denote by $X_{a,\B}(x,x')$ the corresponding matrix entry, where $x\in \B_i, x'\in \B_j$; the entries $X_{a,\B}(x,x')$ are defined by $X_a(x) = \sum_{x'\in \B_j} X_{a,\B}(x,x')\,x'$. The \textit{coefficient quiver} $\Gamma(X,\B)$ of $X$ with respect to $\B$ is defined as follows: the vertex set of $\Gamma(X,\B)$ is the set $\B$ of basis elements of $X$, there is an arrow $(a,x,x')$ between two basis elements $x\in \B_i$ and $x'\in \B_j$ provided $X_{a,\B}(x,x')\ne 0$ for $a:i\to j$.

\begin{defin}[Tree Module, see \cite{ringel_3}]
We call an indecomposable representation $X$ of $Q$ a \textit{tree module} provided there exists a basis $\B$ of $X$ such that the coefficient quiver $\Gamma(X,\B)$ is a tree.
\end{defin}

The following remarkable theorem is due to Ringel.

\begin{theorem}[\cite{ringel_3}]
\label{tree-module}
Let $k$ be a field and let $Q$ be a quiver. Any exceptional representation of $Q$ over $k$ is a tree module.
\end{theorem}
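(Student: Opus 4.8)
The plan is to recast tree-ness as a pure edge-count and then induct on the total dimension $\dim X := \sum_{i\in Q_0}\dim_k X_i$. First I would note that indecomposability already forces $\Gamma(X,\B)$ to be connected for \emph{every} basis $\B$: if the coefficient quiver split into two components, the induced partition of $\B$ would express $X$ as a direct sum of the two spanned subrepresentations, since no structure map $X_a$ could send a basis vector of one part into the other. A connected graph on $\dim X$ vertices is a tree exactly when it has $\dim X-1$ edges, and the number of edges of $\Gamma(X,\B)$ is the total number of nonzero entries $\sum_{a\in Q_1}\#\{(x,x'):X_{a,\B}(x,x')\ne 0\}$. So the whole problem reduces to finding a basis for which the structure maps have jointly at most $\dim X-1$ nonzero entries; connectivity then upgrades this to equality, hence to a tree. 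The base case $\dim X=1$ is immediate: a one-dimensional representation of a loop-free quiver is simple, and its coefficient quiver is a single vertex.

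For the inductive step I would use the universal extension machinery recalled in Section~\ref{extension-functors}. Since $X$ is exceptional over the hereditary algebra $kQ$, it is a brick and $\udim X$ is a real Schur root; in particular Theorem~A applies and $X$ is of maximal rank type. Invoking the theory of exceptional sequences (the braid group acts transitively on complete exceptional sequences, the simples forming one of them), a non-simple such $X$ can be reduced by a finite chain of inverse universal extension functors $\overline{\sigma}_S^{-1}$ or $\underline{\sigma}_S^{-1}$ at simple modules, each step strictly dropping the total dimension. Concretely, after choosing a suitable simple $S=S(i)$ one obtains a strictly smaller exceptional module $Y:=X^{-S}$ and a short exact sequence
\begin{equation*}
0\longrightarrow Y\longrightarrow X\longrightarrow S^{\,r}\longrightarrow 0,\qquad r=\dim_k\Ext^1_{kQ}(S,Y)=\dim_k\Hom_{kQ}(X,S)\ge 1 .
\end{equation*}
Because the equivalence $\overline{\sigma}_S:\m^{-S}\to\m^{S}/S$ preserves indecomposability and the vanishing of self-extensions, $Y$ is again exceptional, and by the inductive hypothesis it carries a tree basis $\B_Y$, while $S$ carries the trivial one-vertex basis.

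The shape of a basis for $X$ is then forced. I would take $\B_X=\B_Y\sqcup\{v_1,\dots,v_r\}$, with the $v_\ell$ sitting at vertex $i$ and lifting a basis of $\Ext^1_{kQ}(S,Y)\cong\coker\big(Y_i\xrightarrow{(Y_b)_{b}}\bigoplus_{b\in T^Q(i)}Y_{h(b)}\big)$. Since every arrow $a\in H^Q(i)$ into $i$ must map into $Y_i$, the new vertices receive no incoming edges, and the only new edges run from each $v_\ell$ along arrows $b\in T^Q(i)$ into $\B_Y$; moreover the $v_\ell$ are pairwise non-adjacent. The target tree must have $\dim Y-1+r$ edges, so each $v_\ell$ has to attach to $\Gamma(Y,\B_Y)$ by \emph{exactly one} edge — two would create a cycle, zero would disconnect the quiver. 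This is the main obstacle and the real content of the theorem: I would need to choose $\B_Y$ and the lifts of the $\Ext^1$-classes \emph{simultaneously}, so that every cokernel class admits a representative in $\bigoplus_b Y_{h(b)}$ equal to a single $\B_Y$-coordinate vector. Such a monomial normal form does not follow from a tree basis of $Y$ alone; the hard part will be to strengthen the inductive hypothesis to retain enough control of how $\im(Y_b)_b$ sits relative to $\B_Y$, using the maximal rank type property from Theorem~A (which makes $(Y_b)_b$ of maximal rank, so its cokernel is spanned by genuine coordinate directions) to sparsify the extension data compatibly with the construction of $\B_Y$. The dual reduction via $\underline{\sigma}_S^{-1}$ handles the symmetric case $X\in\m_S$, and completes the induction.
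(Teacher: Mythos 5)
Note first that the paper does not prove Theorem~\ref{tree-module} at all --- it is quoted from \cite{ringel_3} --- but the counting argument you are reaching for is carried out in the proof of Lemma~\ref{lemma-univ-tree}, and comparing your sketch with that proof exposes two problems. The serious one is your inductive step: you assume that every non-simple exceptional module can be strictly reduced by an inverse universal extension functor $\overline{\sigma}_S^{-1}$ or $\underline{\sigma}_S^{-1}$ at a \emph{simple} module $S$. This is false. For the Kronecker quiver and the preprojective exceptional module $P$ of dimension vector $(2,3)$ one checks $\Hom_{kQ}(P,S(2))=0$, $\Hom_{kQ}(S(1),P)=0$, $\Ext^1_{kQ}(S(1),P)\ne 0$ and $\Ext^1_{kQ}(P,S(2))\ne 0$; hence $\overline{\sigma}_{S(1)}^{-1}$ and $\underline{\sigma}_{S(2)}^{-1}$ are not defined on $P$, while $\overline{\sigma}_{S(2)}^{-1}$ and $\underline{\sigma}_{S(1)}^{-1}$ fix $P$. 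Transitivity of the braid group action on exceptional sequences does not produce the simple $S$ you need. This failure is precisely why Ringel's actual proof runs on Schofield induction --- realising a non-simple exceptional $X$ as the middle term of $0\to Y^u\to X\to Z^v\to 0$ for an orthogonal exceptional \emph{pair} $(Y,Z)$ of smaller, generally non-simple, exceptional modules --- and why the present paper must seed its construction with real Schur representations of $Q'(f)$ built by BGP reflection functors rather than with simples alone.

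The step you single out as ``the main obstacle'' (forcing each new basis vector to attach to the old coefficient quiver by exactly one edge) is, by contrast, not where the difficulty lies, and your proposed appeal to Theorem~A is a red herring. As in the proof of Lemma~\ref{lemma-univ-tree}: the matrix units form a basis of $C^1(S,X)$, so some subset $\Phi$ of them maps onto a basis of $\Ext^1_{kQ}(S,X)=\coker\delta_{SX}$; taking the cocycle representatives from $\Phi$ makes each off-diagonal block of the extension a single matrix unit, whence the total number of nonzero entries is $(d_X-1)+r(d_S-1)+|\Phi|=\sum_{i}\dim Z_i-1$, and connectedness (which, as you correctly argue, follows from indecomposability) then forces the coefficient quiver to be a tree. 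No ``monomial normal form'' for the cokernel of $(Y_b)_b$, and no maximal-rank-type input, is needed --- nor could it be, since Ringel's theorem predates Theorem~A and is used in this paper as an ingredient, not a consequence. Your opening reduction (tree $=$ connected with $\dim X-1$ edges, connectivity from indecomposability, base case of simples) is correct and agrees with the paper; the proof as proposed nonetheless has a genuine gap at the reduction step and leaves its self-declared hard part unresolved.
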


We briefly recall the construction of extensions of representations of quivers, as discussed in \cite{ringel_3} (Section 3) and \cite{ringel_4} (Section 2.1).

Let $Q$ be a quiver with vertex set $Q_0$ and arrow set $Q_1$. Moreover, let $X$ and $X'$ be representations of $Q$. The group $\Ext^1_{kQ}(X,X')$ can be constructed as follows: Let
\begin{eqnarray*}
	C^0(X,X')&:=& \bigoplus_{i\in Q_0} \Hom_k (X_i,X'_i),	\\
	C^1(X,X')&:=& \bigoplus_{a\in Q_1} \Hom_k (X_{t(a)},X'_{h(a)}).
\end{eqnarray*}
We define the map
\begin{eqnarray*}
	\delta_{XX'}:	C^0(X,X')	&\to & C^1(X,X')	\\
								(\phi_i)_i&\mapsto& (\phi_j X_a - X'_a \phi_i)_{a:i\to j}.
\end{eqnarray*}

The importance of $\delta_{XX'}$ is given by the following lemma.

\begin{lemma}[\cite{ringel_4}, Section 2.1, Lemma]
\label{lemma-ext-constr}
We have $\ker \delta_{XX'} = \Hom_{kQ}(X,X')$ and $\coker \delta_{XX'} = \Ext^1_{kQ} (X,X')$.
\end{lemma}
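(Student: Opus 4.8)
The plan is to treat the two assertions separately, getting the kernel statement by directly unwinding definitions and the cokernel statement by identifying $\coker\delta_{XX'}$ with the group of extensions. For the first, a family $(\phi_i)_i\in C^0(X,X')$ lies in $\ker\delta_{XX'}$ precisely when $\phi_j X_a - X'_a\phi_i=0$, i.e. $\phi_j X_a = X'_a\phi_i$, for every arrow $a:i\to j$. This is exactly the requirement that all the squares in the definition of a homomorphism $X\to X'$ commute, so $\ker\delta_{XX'}=\Hom_{kQ}(X,X')$ with no further work.

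For the cokernel, I would first attach to each cochain $\eta=(\eta_a)_{a\in Q_1}\in C^1(X,X')$ a representation $E(\eta)$ with $E(\eta)_i := X'_i\oplus X_i$ and, for $a:i\to j$, the map $E(\eta)_a := \left(\begin{smallmatrix} X'_a & \eta_a \\ 0 & X_a\end{smallmatrix}\right)$. The evident inclusion $X'\hookrightarrow E(\eta)$ and projection $E(\eta)\twoheadrightarrow X$ form a short exact sequence $0\to X'\to E(\eta)\to X\to 0$, so $\eta$ determines a class in $\Ext^1_{kQ}(X,X')$. Conversely, since each vertex space of an arbitrary extension splits as a $k$-vector space, every extension is equivalent to one of the form $E(\eta)$; hence the assignment $\eta\mapsto[E(\eta)]$ is surjective.

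It then remains to compute the fibres. I would show that $E(\eta)$ and $E(\eta')$ are equivalent extensions if and only if there is a family $(\phi_i)_i\in C^0(X,X')$ with $\eta'_a-\eta_a=\phi_j X_a - X'_a\phi_i$ for all $a:i\to j$. Indeed, an equivalence of extensions is forced to be $\left(\begin{smallmatrix} 1 & \phi_i \\ 0 & 1\end{smallmatrix}\right)$ at each vertex, and the condition that these maps constitute a morphism of representations is exactly $\eta'-\eta=\delta_{XX'}((\phi_i)_i)$. Thus the fibres of $\eta\mapsto[E(\eta)]$ are the cosets of $\im\delta_{XX'}$, yielding a bijection $\coker\delta_{XX'}\to\Ext^1_{kQ}(X,X')$.

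The main obstacle I anticipate lies in this last step: verifying that the only equivalences between the normalized extensions $E(\eta)$ are the unitriangular ones described above, and checking that addition of cochains corresponds to the Baer sum of extensions, so that the bijection is an isomorphism of groups. An alternative that sidesteps extensions entirely is to apply $\Hom_{kQ}(-,X')$ to the standard two-term projective resolution $0\to\bigoplus_{a\in Q_1}P_{h(a)}\otimes_k X_{t(a)}\to\bigoplus_{i\in Q_0}P_i\otimes_k X_i\to X\to 0$ and use $\Hom_{kQ}(P_i\otimes_k V,X')\cong\Hom_k(V,X'_i)$ to identify the resulting complex with $\delta_{XX'}$; left-exactness of $\Hom$ then gives the kernel statement and the length-one resolution delivers the cokernel statement simultaneously.
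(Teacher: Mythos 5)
Your proof is correct. Note that the paper itself offers no proof of this lemma: it is quoted verbatim from Ringel (\cite{ringel_4}, Section 2.1), so there is no in-paper argument to compare against. Your argument is the standard one, and it is exactly the mechanism the paper implicitly relies on later: the block upper-triangular matrices $Z_b$ appearing in the proof of Lemma \ref{lemma-univ-tree} are precisely your representations $E(\eta)$ with $\eta$ running over the chosen basis elements of $\coker\delta_{SX}$. Both halves of your argument check out: the kernel computation is an immediate unwinding of the commuting-square condition, and the cokernel computation via $E(\eta)_a = \left(\begin{smallmatrix} X'_a & \eta_a \\ 0 & X_a\end{smallmatrix}\right)$ is right, including the fibre computation $\eta'-\eta=\delta_{XX'}((\phi_i)_i)$. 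The ``obstacle'' you flag is not really one: an equivalence of extensions is by definition a morphism restricting to the identity on $X'$ and inducing the identity on $X$, which forces the unitriangular shape at every vertex, and the compatibility of cochain addition with the Baer sum is a routine direct computation. Your alternative route via the standard projective resolution $0\to\bigoplus_{a}P_{h(a)}\otimes_k X_{t(a)}\to\bigoplus_{i}P_i\otimes_k X_i\to X\to 0$ is equally valid and has the advantage of producing the group structure for free; either version would serve as a complete proof.
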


The following proof follows closely the arguments given in \cite{ringel_3} (Section 3 and Section 6).

\begin{lemma}
\label{lemma-univ-tree}
Let $Q$ be a quiver. Let $S$ be a representation with $\End_{kQ} S=k$ and $\Ext^1_{kQ} (S,S)=0$. Moreover, let $X\in \m^{-S}\; (\textrm{resp.},\; X\in \m_{-S})$ be a tree module. Then the representation $\overline{\sigma}_S(X)\; (\textrm{resp.},\; \underline{\sigma}_S(X))$ is a tree module. In particular, let $X\in\m^{-S}_{-S}$ be a tree module, then $\sigma_S(X)$ is a tree module.
\end{lemma}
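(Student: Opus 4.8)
The plan is to prove the statement for $\overline{\sigma}_S$ by writing down an explicit tree basis of the middle term of the universal extension; the case of $\underline{\sigma}_S$ is then entirely dual (pass to the opposite quiver), and since $\sigma_S$ is by definition the composite of the two constructions, the final assertion for $\m^{-S}_{-S}$ follows at once from the fact that each factor preserves tree modules. Throughout, note that the hypotheses $\End_{kQ}S=k$ and $\Ext^1_{kQ}(S,S)=0$ say that $S$ is exceptional, so by Theorem \ref{tree-module} it is itself a tree module; I fix once and for all a tree basis $\B_S$ of $S$ and a tree basis $\B_X$ of $X$, the latter existing by hypothesis.

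The key step is to choose the basis $E_1,\dots,E_r$ of $\Ext^1_{kQ}(S,X)$ appearing in the construction of $\overline{\sigma}_S(X)$ as economically as possible. By Lemma \ref{lemma-ext-constr} we have $\Ext^1_{kQ}(S,X)=\coker\delta_{SX}$, a quotient of $C^1(S,X)=\bigoplus_{a\in Q_1}\Hom_k(S_{t(a)},X_{h(a)})$. Relative to the chosen bases, $C^1(S,X)$ carries a distinguished basis of ``matrix units'', each supported at a single arrow $a$ and a single pair $(x,y)$ with $x\in\B_{S,t(a)}$ and $y\in\B_{X,h(a)}$. Since $C^1(S,X)\to\Ext^1_{kQ}(S,X)$ is surjective, the residues of these matrix units span, so I may extract $r=\dim\Ext^1_{kQ}(S,X)$ of them whose images form a basis; I take these as $E_1,\dots,E_r$. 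Because the universal extension is independent of the chosen basis up to isomorphism (part of Ringel's setup in \cite{ringel}), computing $\overline{\sigma}_S(X)$ with these representatives yields a representation isomorphic to $\overline{\sigma}_S(X)$.

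With this choice the middle term $Z$ of $0\to X\to Z\to\bigoplus_r S\to 0$ has $Z_i=X_i\oplus\bigoplus_{\ell=1}^r S_i$ and, for each arrow $a$, a block upper-triangular $Z_a$ with diagonal blocks $X_a$ and $r$ copies of $S_a$, the off-diagonal block into the $X$-row being, in the $\ell$-th $S$-column, the single matrix unit representing $E_\ell$. Taking $\B_Z=\B_X\sqcup\bigsqcup_{\ell=1}^r\B_S^{(\ell)}$ as a basis of $Z$, the coefficient quiver $\Gamma(Z,\B_Z)$ consists of the tree $\Gamma(X,\B_X)$, of $r$ disjoint copies of the tree $\Gamma(S,\B_S)$, and of exactly $r$ further arrows, the $\ell$-th joining a single vertex of the copy $\B_S^{(\ell)}$ to a single vertex of $\B_X$. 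Crucially, each connecting arrow links a copy of $S$ to $X$ and never two copies of $S$ to one another, this being forced by the upper-triangular shape of the $Z_a$.

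It remains to see $\Gamma(Z,\B_Z)$ is a tree. Starting from the disjoint union of the $X$-tree and the $r$ separate $S$-trees and adding the $r$ connecting arrows one at a time, each new arrow joins a hitherto separate $S$-component to the component containing $\B_X$; thus each addition lowers the number of connected components by one and creates no cycle, so the result is connected with one fewer edge than vertices, hence a tree. Finally $Z=\overline{\sigma}_S(X)$ is indecomposable because $\overline{\sigma}_S$ is part of the categorical equivalence of \cite{ringel}, so $Z$ is genuinely a tree module. The dual statement for $\underline{\sigma}_S$ follows by applying this argument to the opposite quiver, and since $\sigma_S$ is the composite of the two constructions, the assertion for $X\in\m^{-S}_{-S}$ follows. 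The one point demanding care is the second step — arranging that each copy of $S$ is glued to $X$ along a single matrix coefficient — for once the connecting data are this simple, the tree property reduces to the elementary observation that gluing trees along single edges produces a tree.
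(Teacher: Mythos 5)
Your proposal is correct and follows essentially the same route as the paper: both choose the representatives of a basis of $\Ext^1_{kQ}(S,X)=\coker\delta_{SX}$ from among the matrix units of $C^1(S,X)$, so that each of the $r$ copies of $S$ in the middle term $Z$ is glued to $X$ by a single non-zero off-diagonal entry, and then observe that the resulting coefficient quiver is a tree. The only cosmetic difference is the last step — you argue directly that attaching each $S$-tree to the $X$-tree by one edge creates no cycle, while the paper counts the non-zero entries ($d_X+rd_S-1$) and invokes indecomposability of $Z$ for connectedness; both are valid.
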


\begin{proof}
We only consider the situation for the functor $\overline{\sigma}_S$. The situation for $\underline{\sigma}_S$ is analogous. Since $\sigma_S$ is given by applying $\overline{\sigma}_S$ and $\underline{\sigma}_S$ successively, the second assertion follows from the first.

We recall the construction of $\overline{\sigma}_S(X)$: Let $E_1,\ldots,E_r$ be a basis of the $k$-vector space $\Ext^1_{kQ} (S,X)$. Consider the exact sequence $E$ given by the elements $E_1,\ldots,E_r$
\begin{equation}
		E: 0\to X\to Z\to \bigoplus_r S \to 0,			 \tag{$+$}\label{ses-tree}
\end{equation}
then we have $\overline{\sigma}_S(X)=Z$. First of all, we note that $Z$ is indecomposable since \linebreak $\overline{\sigma}_S: \m^{-S}\to \m^S/S$ defines an equivalence of categories. Moreover, by Theorem \ref{tree-module} the representation $S$ is a tree module. Thus, we can chose a basis $\B_X$ of $X$ and a basis $\B_S$ of $S$ such that the corresponding coefficient quivers $\Gamma(X,\B_X)$ and $\Gamma(S,\B_S)$ are trees. We set $d_X:=\sum_{i\in Q_0} \dim X_i$ (dimension of $X$) and $d_S:=\sum_{i\in Q_0} \dim S_i$ (dimension of $S$). Since $X$ and $S$ are indecomposable representations the corresponding coefficient quivers are connected and, hence, $\Gamma(X,\B_X)$ has $d_X-1$ arrows and $\Gamma(S,\B_S)$ has $d_S-1$ arrows.

Let $a\in Q_1$. For given $1\le s\le t(a)$ and $1\le t\le h(a)$ we denote by \linebreak $M_{SX}(a,s,t) \in \Hom_k (S_{t(a)}, X_{h(a)})$ the matrix unit with entry one in the column with index $s$ and the row with index $t$ and zeros elsewhere. The set
\begin{eqnarray*}
			H_{SX}:=\{ M_{SX} (a,s,t): a\in Q_1, 1\le s\le t(a), 1\le t\le h(a)\}
\end{eqnarray*}
is clearly a basis of $C^1(S,X)$. Hence, we can choose a subset 
\begin{eqnarray*}
\Phi:=\{M_{SX} (a_i,s_i,t_i) : 1\le i\le r\} \subset H_{SX}
\end{eqnarray*}
such that $\Phi\oplus\im \delta_{SX} = C^1(S,X)$, which implies that the residue classes $\phi + \im \delta_{SX}\, (\phi\in\Phi)$ form a basis of $\Ext^1_{kQ}(S,X)$; these elements are responsible for obtaining the extension (\ref{ses-tree}).

We are now able describe the matrices of the representation $Z$ with respect to the basis $\B_X\cup \B_S$. Let $b\in Q_1$. The matrix $Z_b$ has the following form
\begin{eqnarray*}
	Z_b &=& \left[
							\begin{array}{cccc}
								X_b		&		N(b,1)		&	\hdots 	&	N(b,r)			\\
								      	& S_b				&					& 			\\
								      	&							&   \ddots&    	  \\
								      	&		      	  &         &	S_b				
							\end{array}
					 \right]
\end{eqnarray*}
with all other entries equal to zero and 
\begin{eqnarray*}
N(b,i) = \begin{cases}
						M(a_i,s_i,t_i),	&	\textrm{if $b=a_i$}	\\
						{\bf{0}},								& \textrm{otherwise},
				  \end{cases}
\end{eqnarray*}
where ${\bf{0}}$ denotes the zero matrix of the appropriate size. This explicit description allows us to count the overall number of non-zero entries in the matrices of the representation $Z$ with respect to the basis $\B_X\cup\B_S$: this number equals the number of arrows of the coefficient quiver $\Gamma(Z,\B_X\cup \B_S)$. We easily see that there are 
\begin{equation*}
(d_X-1)+r(d_S-1)+|\Phi| = d_X+rd_S-1 = \sum_{i\in Q_0} \dim Z_i-1
\end{equation*}
non-zero entries.

Now, since $Z$ is indecomposable, the coefficient quiver $\Gamma(Z,\B_X\cup\B_S)$ is connected and, hence, $\Gamma(Z,\B_X\cup\B_S)$ is a tree.

\end{proof}

The previous lemma and Theorem B give the following result.

\begin{proposition}
Let $\a$ be a positive real root for $Q=Q(f,g,h)\; (f,g,h\ge 1)$. Then the representation $X_\a$ is a tree module.
\end{proposition}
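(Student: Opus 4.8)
The plan is to combine the tree-module machinery of Lemma~\ref{lemma-univ-tree} with the explicit construction of real root representations provided by Theorem~B (the more explicit version). The key observation is that every real root representation $X_\a$ of $Q$ is built from known tree modules by repeated application of the universal extension functors $\sigma_{e_3}$ and $\sigma_\chi$, and Lemma~\ref{lemma-univ-tree} tells us precisely that these functors preserve the tree-module property. So the proof is essentially an induction following the construction.

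First I would dispose of the non-sincere roots. If $\a$ is a real root for $Q$ which is not sincere, then $X_\a$ is supported on a proper subquiver, namely (up to symmetry) either $Q'(f)$ or $Q''(g,h)$ together with possibly the remaining isolated vertex. Real root representations of $Q'(f)$ are preprojective/preinjective for a tame-type (indeed, the Kronecker-like) quiver and are tree modules by Theorem~\ref{tree-module} since they are exceptional; real root representations of $Q''(g,h)$ are tree modules by Ringel's result in \cite{ringel}. The simple representations $S(i)$ are trivially tree modules. This handles the base cases.

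For the sincere case I would invoke Theorem~B in its explicit form. It asserts that for a sincere real root $\a$ we have either $X_\a = \sigma_{\zeta_i(n)} S(3)$, or
\begin{equation*}
	X_\a = \sigma_{\chi_m}\sigma_{e_3}\sigma_{\chi_{m-1}}\ldots \sigma_{\chi_2}\sigma_{e_3} X_{\chi_1(e_j)},
\end{equation*}
where the innermost module $X_{\chi_1(e_j)}$ is, by Lemma~\ref{constr-first-part}, either an indecomposable of the subquiver $Q'(f)$ (hence a tree module, as above) or is itself obtained by applying functors $\sigma_{\zeta_i(n)}$ to a simple module. Since $\sigma_{\zeta_i(n)}$ and $\sigma_{e_3}$ are by definition instances of $\sigma_S$ for $S$ a suitable real Schur representation (namely $S(3)$ or a real root representation $X_\chi$ of $Q'(f)$, which has $\End_{kQ}X_\chi = k$ and $\Ext^1_{kQ}(X_\chi,X_\chi)=0$ as noted in the discussion of the subquivers), Lemma~\ref{lemma-univ-tree} applies at each stage. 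Thus, starting from a tree-module base case and applying the functors one at a time, the tree-module property propagates outward through the entire composite.

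The only subtlety, and the step I would be most careful about, is verifying that at each application the relevant module actually lies in the domain $\m^{-S}_{-S}$ required for $\sigma_S$ (and hence for Lemma~\ref{lemma-univ-tree}) to be applicable; but this is exactly what Corollary~\ref{reflection-cor}, Lemma~\ref{reflection-lemma-1}, and Lemma~\ref{reflection-lemma-2} guarantee, and it is precisely the content that makes the composite in Theorem~B well-defined. Consequently no new work is needed here: the well-definedness of the construction in Theorem~B already certifies that each functor is applied to a module in its domain, so Lemma~\ref{lemma-univ-tree} may be invoked verbatim at every step. By induction on the number of functors applied, $X_\a$ is a tree module, completing the proof.
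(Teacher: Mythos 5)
Your proposal is correct and follows essentially the same route as the paper: base cases are the exceptional representations of $Q'(f)$ (tree modules by Ringel's theorem) and the simples, and then Theorem B (resp.\ the results of Ringel for the non-sincere case) together with the lemma that $\sigma_S$ preserves tree modules propagates the property by induction along the construction. The paper's proof is just a terser version of the same argument.
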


\begin{proof}
Representations of the subquiver $Q'=Q'(f)\; (f\ge 1)$ are exceptional representations, i.e. have no self-extensions, and, hence, are tree modules by Theorem \ref{tree-module}.

Now, let $X$ be a representation of $Q$ with $\udim X[3]\ne 0$. Then, by Theorem B (or the results in \cite{ringel} if $X$ is not sincere), $X$ can be constructed by using universal extension functors starting from a simple representation or a real root representation of the subquiver $Q'$: which is a tree module.

By Lemma \ref{lemma-univ-tree} the image of a tree module under the functor $\sigma_S$ is again a tree module. This proves the claim.

\end{proof}

\end{document}